\theoremstyle{plain}
\newtheorem{teor}{Theorem}
\numberwithin{teor}{section}
\numberwithin{equation}{section}
\theoremstyle{definition}
\newaliascnt{defi}{teor}
\newtheorem{defi}[defi]{Definition}
\theoremstyle{plain}
\newaliascnt{lemma}{teor}
\newtheorem{lemma}[lemma]{Lemma}
\theoremstyle{plain}
\newaliascnt{prop}{teor}
\newtheorem{prop}[prop]{Proposition}
\theoremstyle{plain}
\newaliascnt{cor}{teor}
\theoremstyle{plain}
\newaliascnt{conj}{teor}
\newtheorem{conj}[conj]{Conjecture}
\theoremstyle{definition}
\newaliascnt{ex}{teor}
\theoremstyle{definition}
\newaliascnt{oss}{teor}
\newtheorem{oss}[oss]{Remark}
\theoremstyle{plain}
\DeclarePairedDelimiter{\abs}{\lvert}{\rvert}
\DeclarePairedDelimiter{\norma}{\lVert}{\rVert}
\DeclareMathOperator{\diam}{diam}
\newcommand{\R}{\mathbb{R}}
\newcommand{\Hn}{\mathcal{H}^{n-1}}
\newcommand{\eps}{\varepsilon}
\newcommand{\leqnomode}{\tagsleft@true\let\veqno\@@leqno}
\newcommand{\reqnomode}{\tagsleft@false\let\veqno\@@eqno}
\newcommand{\ddfrac}[2]{\frac{\displaystyle #1}{\displaystyle #2}}
\newcommand{\prof}{\mathcal{P}}
\title{Estimates on the Neumann and Steklov principal eigenvalues of collapsing domains}
\author{P. Acampora, V. Amato, E. Cristoforoni}
\date{}
\newcommand{\Addresses}{{
 \bigskip 
 \footnotesize 
 
 \textsc{Dipartimento di Matematica e Applicazioni ``R. Caccioppoli'', Universit\`a degli studi di Napoli Federico II, Via Cintia, Complesso Universitario Monte S. Angelo, 80126 Napoli, Italy.}\par\nopagebreak 
 
 \medskip 
 
 \textit{E-mail address}, P.~Acampora: \texttt{paolo.acampora@unina.it} 
  
 \medskip

 \textsc{
 	Holder of a research grant from Istituto Nazionale di Alta Matematica "Francesco Severi" at Dipartimento di Matematica e Applicazioni "R. Caccioppoli", Via Cintia, Complesso Universitario Monte S. Angelo, 80126 Napoli, Italy.}

 \medskip 
   \footnotesize 
  \textit{E-mail address}, V. ~Amato: \texttt{amato@altamatematica.it} 

  \medskip

\textsc{Mathematical and Physical Sciences for Advanced Materials and Technologies, Scuola Superiore Meridionale, Largo San Marcellino 10, 80126, Napoli, Italy.}\par\nopagebreak 
 
 \medskip 
 
 \textit{E-mail address}, E.~Cristoforoni: \texttt{emanuele.cristoforoni@unina.it} 
}}
\begin{document}
\reversemarginpar
\maketitle
\begin{abstract}
 We investigate the relationship between the Neumann and Steklov principal eigenvalues emerging from the study of collapsing convex domains in $\mathbb{R}^2$. Such a relationship allows us to give a partial proof of a conjecture concerning estimates of the ratio of the former to the latter: we show that thinning triangles maximize the ratio among convex thinning sets, while thinning rectangles minimize the ratio among convex thinning with some symmetry property. \\ 
 \textbf{MSC 2020: }35P15, 49Q10, 52A40 \\
\textbf{Keywords: }Neumann eigenvalue, Steklov eigenvalue, thinning convex sets, Sturm-Liouville
 \end{abstract}
\renewcommand*{\sectionautorefname}{Section}
\renewcommand*{\subsectionautorefname}{Subsection}
\section{Introduction}
Let $\Omega\subset \R^2$ be a bounded, open, connected and Lipschitz set. We define the Neumann and Steklov eigenvalues as follows: find positive constants $\mu, \sigma$ such that there exist non-zero solutions to the boundary value problems
\[
\begin{dcases}
-\Delta u=\mu u &\text{in }\Omega, \\
\frac{\partial u}{\partial \nu}=0 &\text{on }\partial\Omega,
\end{dcases}
\qquad \qquad \qquad
\begin{dcases}
\Delta v=0 &\text{in }\Omega, \\
\frac{\partial v}{\partial \nu}=\sigma v &\text{on }\partial\Omega.
\end{dcases}
\]
The regularity assumption we made on $\Omega$ ensures that we can find two increasing and divergent sequences of eigenvalues 
\[
\begin{split}
0=\mu_0(\Omega) < \mu_1(\Omega)\le \mu_2(\Omega)\le \dots \le \mu_k(\Omega)\le \dots, \\[5 pt]
0=\sigma_0(\Omega) < \sigma_1(\Omega)\le \sigma_2(\Omega)\le \dots \le \sigma_k(\Omega)\le \dots,
\end{split}
\]
which are the spectrum of the Neumann laplacian and the spectrum of the Dirichlet-to-Neumann map respectively. We recall the variational characterization of the eigenvalues, for $k\geq 0$:
\[
\mu_k(\Omega)=\inf_{E\in  \mathcal{S}_{k+1}(\Omega)}\,\,\sup_{w\in E\setminus \{0\}}\ddfrac{\int_\Omega \abs{\nabla w}^2\,dx}{\int_\Omega w^2\,dx}, \qquad \qquad \sigma_k(\Omega)=\inf_{\substack{E\in  \mathcal{S}_{k+1}(\Omega)}}\,\,\sup_{ w\in  E\setminus \{0\}}\ddfrac{\int_\Omega \abs{\nabla w}^2\,dx}{\int_{\partial\Omega} w^2\,d\Hn},
\]
where $\mathcal{S}_{k+1}(\Omega)$ is the family of all linear subspaces of $H^1(\Omega)$ of dimension $k+1$. In particular, we are interested in the principal eigenvalues, i.e. $k=1$, namely

\noindent\begin{minipage}{0.5\linewidth}
\begin{equation*}
  \mu_1(\Omega)=\inf_{\substack{w\in H^1(\Omega)\setminus\{0\} \\\int_\Omega w=0 }}\ddfrac{\int_\Omega \abs{\nabla w}^2\,dx}{\int_\Omega w^2\,dx}, 
\end{equation*}
\end{minipage}%
\begin{minipage}{0.5\linewidth}
\begin{equation*}
\sigma_1(\Omega)=\inf_{\substack{w\in H^1(\Omega)\setminus\{0\} \\ \int_{\partial \Omega}w= 0}}\ddfrac{\int_\Omega \abs{\nabla w}^2\,dx}{\int_{\partial\Omega} w^2\,d\Hn}.
\end{equation*}
\end{minipage}\par\vspace{\belowdisplayskip}
Many authors in the literature identified remarkable similarities between the two families of eigenvalues. Moreover,  an underlying relationship holds between the two quantities. For instance, Steklov eigenvalues can be seen as limits of weighted Neumann eigenvalues, while Neumann eigenvalues can be obtained as limits of Steklov eigenvalues by suitably perforating the set $\Omega$. We refer, for instance, to \cite{LP15}, and \cite{GHL21} for these results. We want to explore the same relationship between the two eigenvalues, from the shape optimization point of view.

Namely, we could be interested in the scale invariant ratio
\[
F(\Omega)=\frac{\abs{\Omega}\mu_1(\Omega)}{P(\Omega)\sigma_1(\Omega)},
\]
and, consequently, in the two problems
\begin{equation}
\label{problema0}
\min_{\Omega\in \mathcal{K}}F(\Omega),\qquad\qquad \max_{\Omega\in \mathcal{K}}F(\Omega),
\end{equation}
where $\mathcal{K}$ is a suitable class of sets, $\abs{\cdot}$ denotes the area, and $P(\cdot)$ denotes the perimeter. Unfortunately, the choice
\[
\mathcal{K}=\Set{\Omega\subset\R^2 | \Omega \text{ bounded, open and Lipschitz}}
\]
causes the problems in \eqref{problema0} to be ill-posed, in the sense that
\[
\inf_{\mathcal K}F(\Omega)=0, \qquad\qquad \sup_{\mathcal{K}}F(\Omega)=+\infty,
\]
as shown in \cite{HM22}, \cite{BHM21}, and \cite{GKL21}.

In order to obtain some comparison between Neumann and Steklov eigenvalues, we address the problems in \eqref{problema0} restricting the class of admissible sets to
\begin{equation}
\label{eq: Kconvex}
\mathcal{K}_c=\Set{\Omega\subset\R^2 | \Omega\text{ bounded, open and convex}}\!.
\end{equation}
This choice of $\mathcal{K}_c$ avoids shapes that could make $F$ degenerate, and precisely it could be shown, as in \cite{HM22}, that there exist two constants $c,C>0$ such that
\[
c\le F(\Omega)\le C \qquad \forall \, \Omega\in\mathcal{K}_c.
\]
Additionally, numerical simulations lead the authors to state the following
\begin{conj}[Henrot, Michetti \cite{HM22}]
\label{conje}
    Let $\mathcal{K}_c$ be as in \eqref{eq: Kconvex}, then
    \[
        1< F(\Omega)< 2 \qquad \forall \, \Omega\in\mathcal{K}_c.
    \]
    Moreover, the inequalities are sharp in the following sense: there exists a sequence $R_n$ of thinning rectangles, and a sequence $T_n$ of thinning triangles such that
    \[
        \lim_n F(R_n)=1, \qquad \qquad \lim_n F(T_n)=2.
    \]
\end{conj}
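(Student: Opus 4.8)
The plan is to split the statement into the two–sided bound $1<F(\Omega)<2$, to be secured for \emph{every} fixed $\Omega\in\mathcal K_c$, and the sharpness, to be obtained in the collapsing regime; these two parts call for genuinely different mechanisms. For sharpness I would first develop the thinning asymptotics: writing a collapsing convex family as $\Omega_\eps=\Set{(x,y) | x\in(0,1),\ \eps\phi^-(x)<y<\eps\phi^+(x)}$ with $h=\phi^+-\phi^-$ concave, one shows that the rescaled Neumann and Steklov quotients converge to the one–dimensional Sturm--Liouville quotients
\[
\mu^*[h]=\inf_{\int h\,u=0}\frac{\int_0^1 h\,(u')^2}{\int_0^1 h\,u^2},\qquad \sigma^*[h]=\inf_{\int v=0}\frac{\int_0^1 h\,(v')^2}{2\int_0^1 v^2},
\]
so that $F(\Omega_\eps)\to F_0[h]:=\bigl(\int_0^1 h\bigr)\mu^*[h]\big/\bigl(2\,\sigma^*[h]\bigr)$. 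Choosing $h\equiv1$ (thinning rectangles) the weighted Neumann and Steklov profiles coincide and $F_0=1$; choosing $h(x)=1-x$ (thinning triangles) an explicit computation gives $F_0=2$. This produces the two extremizing sequences and shows the constants are optimal.

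For the upper bound at a fixed domain I would insert the first Steklov eigenfunction $v$, which satisfies $\int_{\partial\Omega}v\,d\Hn=0$ and $\int_\Omega\abs{\nabla v}^2=\sigma_1(\Omega)\int_{\partial\Omega}v^2\,d\Hn$, into the Neumann quotient after subtracting its bulk mean $\bar v=\abs{\Omega}^{-1}\int_\Omega v\,dx$. This yields
\[
\mu_1(\Omega)\le\frac{\sigma_1(\Omega)\int_{\partial\Omega}v^2\,d\Hn}{\int_\Omega v^2\,dx-\abs{\Omega}\,\bar v^{\,2}},\qquad\text{so}\qquad F(\Omega)\le\frac{\abs{\Omega}\int_{\partial\Omega}v^2\,d\Hn}{P(\Omega)\Bigl(\int_\Omega v^2\,dx-\abs{\Omega}\,\bar v^{\,2}\Bigr)}.
\]
Hence $F(\Omega)<2$ reduces, modulo control of $\bar v$, to the sharp trace inequality $\abs{\Omega}\int_{\partial\Omega}v^2\,d\Hn\le 2\,P(\Omega)\int_\Omega v^2\,dx$ for the harmonic eigenfunction $v$. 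The tool here is a Rellich--Pohozaev identity: pairing the harmonicity of $v$ with the field $x-x_0$, $x_0$ the incenter, produces boundary terms weighted by $(x-x_0)\cdot\nu\ge\rho>0$, a quantity bounded below by convexity through the inradius $\rho$; one checks that the disk saturates the inequality. Strictness would then follow directly from $\mu_1(\Omega)$ lying \emph{strictly} below the Neumann quotient of $v$, the two coinciding only in degenerate limits.

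For the lower bound the mirror construction---inserting the first Neumann eigenfunction $u$ into the Steklov quotient after subtracting its boundary mean---reduces $F(\Omega)>1$ to the \emph{reverse} trace inequality $\abs{\Omega}\int_{\partial\Omega}u^2\,d\Hn\ge P(\Omega)\int_\Omega u^2\,dx$. I expect this to be the main obstacle. Bounding the boundary mass of a Neumann eigenfunction \emph{from below} has no analogue of the convexity-plus-harmonicity mechanism above: $u$ is not harmonic and may concentrate in the bulk, so the Rellich identity no longer closes. The natural remedy is to impose the symmetry that forces the boundary mean of $u$ to vanish and pins its nodal line, which is precisely the restriction under which the partial result for the minimizing (thinning) configurations can be expected to hold.

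Finally, even granting both trace inequalities, a subtlety remains that I regard as the deepest difficulty: the sufficient conditions above are tuned to the non-degenerate regime (they are saturated by the disk) and can \emph{fail} as $\Omega$ approaches the thinning triangles and rectangles where $F\to2$ and $F\to1$, while conversely the one–dimensional reduction only governs that degenerate boundary. Closing the gap between the two regimes---uniformly over \emph{all} convex shapes rather than thinning, respectively symmetric, ones---would require either a monotonicity of $F$ under anisotropic thinning or a single trace estimate that survives up to the degenerate limit. Producing such a bridging argument is exactly where a complete proof of the conjecture currently stalls, and it is the reason the result can only be secured partially.
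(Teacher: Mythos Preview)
First, note that the statement you are attempting is a \emph{conjecture}: the paper explicitly does not prove it (``we do not provide an exhaustive solution''). What the paper establishes are the thinning--domain analogues in \autoref{teor: mu/sigma}: for every $h\in\prof_1$ one has $\mu_1(h)/\sigma_1(h)\le 2$, with equality exactly at triangles, and $\mu_1(h)/\sigma_1(h)\ge 1$ under the additional symmetry $h(x)=h(1-x)$. Your sharpness argument---the one--dimensional Sturm--Liouville reduction for collapsing families, with rectangles giving the limit value $1$ and triangles giving $2$---is precisely the paper's mechanism (\autoref{lem: limitproblem} together with the equality case of \autoref{prop: mu/sigma}); that part is correct and aligned with the paper.

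Your attack on the full two--sided bound, by contrast, is not what the paper does and contains concrete gaps. For the upper bound you reduce $F(\Omega)<2$ to the trace inequality $\abs{\Omega}\int_{\partial\Omega}v^2\,d\Hn\le 2\,P(\Omega)\int_\Omega v^2\,dx$ for the harmonic Steklov eigenfunction, ``modulo control of $\bar v$''. Two problems arise. First, the $\bar v$--term sits in the \emph{denominator} of your displayed bound and therefore goes the wrong way: if $\bar v\ne 0$ the right--hand side is strictly \emph{larger} than the trace ratio, so you would actually need $\bar v=0$, which holds on the disk by symmetry but not on a generic convex set. Second, the Rellich--Pohozaev identity for a harmonic $v$ yields relations of the type $\tfrac12\int_{\partial\Omega}(x\!\cdot\!\nu)\abs{\nabla v}^2=\int_{\partial\Omega}(\partial_\nu v)(x\!\cdot\!\nabla v)$, linking gradient boundary terms to one another; it does not produce a comparison between $\int_{\partial\Omega}v^2$ and $\int_\Omega v^2$, and your sketch does not indicate how to close that gap. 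The paper's partial upper bound in the collapsing regime proceeds by an entirely different device---the change of variables $x\mapsto H(x)=\int_0^x h$ defining the operator $\mathcal G$, the identity $\mu_1(h)=\sigma_1(\mathcal G(h))$ of \autoref{prop: mu=sigma}, and the pointwise estimate $\mathcal G(h)\le 2h$ of \autoref{prop: g<2h}---which has no evident two--dimensional analogue. Your candid acknowledgment that the lower bound and the ``bridging'' between the thinning and non--degenerate regimes remain open is well placed: these are exactly the parts the paper also leaves unresolved.
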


The aim of this paper is to take steps towards proving the conjecture; however, we do not provide an exhaustive solution.

The numerical simulations which support \autoref{conje} also suggest that the infimum and the supremum of $F(\Omega)$, in the class $\mathcal{K}_c$, are asymptotically achieved by particular sequences of thinning domains. Therefore we focus on the limits of $F(\Omega_\eps)$, where $\Omega_\eps$ is a family of thinning domains of the type \eqref{omegaeps}.
Indeed, following in the footsteps of \cite{HM22}, for such a family, 
there exists a non-negative concave function $h:[0,1]\to \R$ such that
\[
\lim_{\eps\to0}\mu_1(\Omega_\eps)=\mu_1(h) \qquad \qquad \lim_{\eps\to0} \frac{P(\Omega_\eps)\sigma_1(\Omega_\eps)}{\abs{\Omega_\eps}}=\sigma_1(h)\left(\int_0^1 h(t)\,dt\right)^{-1},
\]
where $\mu_1(h)$ is the first eigenvalue of the Sturm-Liouville problem
\begin{equation}\label{eq:Neumann}
    \begin{cases} -\dfrac{d}{dx}\left(h(x)\dfrac{d v}{dx}(x)\right)=\mu_1(h)h(x)v(x)\qquad &x\in(0,1),\\[15 pt]
    h(0)\dfrac{d v}{dx}(0)=h(1)\dfrac{ d v}{dx}(1)=0,
    \end{cases}
\end{equation}
while $\sigma_1(h)$ is the first eigenvalue of the Sturm-Liouville problem
\begin{equation}\label{eq:Steklov}
    \begin{cases} -\dfrac{d}{dx}\left(h(x)\dfrac{d v}{dx}(x)\right)=\sigma_1(h)v(x)\qquad &x\in(0,1),\\[15 pt]
    h(0)\dfrac{d v}{dx}(0)=h(1)\dfrac{ d v}{dx}(1)=0.
    \end{cases}
\end{equation}
The function $h$, in some sense, represents the profile of the thinning sets $\Omega_\eps$, and, in particular, we have that $h\equiv 1$ represents the limit of a family of thinning rectangles. On the other hand, for every $x_0\in(0,1)$, we let
\[
T_{x_0}(x):=\begin{cases} \dfrac{2x}{x_0} &x\in[0,x_0),\\[15 pt]
\dfrac{2(1-x)}{1-x_0} &x\in[x_0,1],
\end{cases}
\]
and 
\[
T_0(x)=2(1-x), \qquad \qquad T_1(x)=2x,
\]
which represents the limit of a family of thinning triangles. Consequently, familiarizing oneself with the properties of $\mu_1(h)$ and $\sigma_1(h)$ can offer advantages when it comes to analyzing the eigenvalues $\mu_1(\Omega)$ and $\sigma_1(\Omega)$. It is worth mentioning that the quantities $\mu_1(h)$ and $\sigma_1(h)$ are in a way related to a weighted Hardy constant (see \cite{KPS17}, \cite{OK90}, \cite{PW60}, and \autoref{prop: sigma=hardy}). 

Following this path, we refer to \cite{PW60}, \cite{HM22} and \cite{T65} for the proof of the subsequent properties: let 
\[
\prof=\Set{h\in L^\infty(0,1)\colon\, h \text{ non negative, concave and not identically zero}},
\] 
and
\[
\prof_1=\Set{h\in\prof | \int_0^1h(t)\,dt=1},
\]
then for every $h\in \prof_1$, we have that
\[
\begin{split}
\pi^2=\mu_1(1)\le \,\, &\mu_1(h)\le\mu_1(T_{1/2})
\\[5 pt]
&\sigma_1(h)\le \sigma_1(p)=12,
\end{split}
\]
where $p$ is the arc of parabola $p(x)=6x(1-x)$.\medskip

Here we state the main results of this work

\vspace{-3mm}
\begin{teor}
\label{teor: minstek}
The minimum problem 
\begin{equation}
    \label{eq:minstek}
    \min_{h\in \prof_1}\sigma_1(h)
\end{equation}
admits the functions $T_0$ and $T_1$ as unique solutions.
\end{teor}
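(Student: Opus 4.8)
The plan is to exploit a structural feature that is special to the Steklov quotient: in the Rayleigh characterization
\[
\sigma_1(h)=\inf_{\substack{v\in H^1(0,1)\\ \int_0^1 v\,dx=0}}\frac{\int_0^1 h\,|v'|^2\,dx}{\int_0^1 v^2\,dx},
\]
the denominator does not depend on $h$, so for fixed admissible $v$ the functional $h\mapsto \big(\int_0^1 v^2\big)^{-1}\int_0^1 h\,|v'|^2$ is \emph{linear} in $h$. Hence $\sigma_1$ is an infimum of affine functionals, i.e. a concave and upper semicontinuous functional on $\prof_1$. The first step is to record this concavity together with the compactness of $\prof_1$: a non-negative concave function of unit integral is bounded above by $2$ (its area dominates the triangle under its peak) and is equi-Lipschitz on every interval $[\delta,1-\delta]$, so $\prof_1$ is compact and convex in $L^1(0,1)$.

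The second step is to reduce the minimization to the extreme points of $\prof_1$. By Choquet's theorem every $h\in\prof_1$ admits a representation $h=\int T_{x_0}\,d\mu(x_0)$ over the extreme points, and Jensen's inequality for the concave upper semicontinuous functional $\sigma_1$ gives $\sigma_1(h)\ge\int\sigma_1(T_{x_0})\,d\mu(x_0)$. I would therefore prove that the extreme points of $\prof_1$ are exactly the tent functions $\{T_{x_0}:x_0\in[0,1]\}$, with $T_0,T_1$ the degenerate affine tents. This is a perturbation argument: a tent is extreme because on each of its two linear pieces a splitting $T_{x_0}=\tfrac12(h_1+h_2)$ forces $h_1,h_2$ to be affine there (a sum of concave functions equal to an affine one has affine summands) and to vanish wherever $T_{x_0}$ vanishes, which pins down $h_1=h_2=T_{x_0}$; conversely any $h$ that is not a single-kink function vanishing suitably at the endpoints admits an admissible perturbation $h\pm\eps\phi\in\prof_1$ and so is not extreme.

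The third step, which I expect to be the main obstacle, is the minimization over the one-parameter family, namely $\min_{x_0\in[0,1]}\sigma_1(T_{x_0})=\sigma_1(T_0)=\sigma_1(T_1)$, attained only at the endpoints. On the linear pieces $h=\tfrac{2x}{x_0}$ and $h=\tfrac{2(1-x)}{1-x_0}$ the equation \eqref{eq:Steklov} reduces to a Bessel equation of order zero; selecting the solutions regular at the degenerate endpoints and imposing continuity of $v$ and of the flux $h\,v'$ at $x_0$ yields the eigenvalue condition
\[
J_0(a)\,J_1(b)+J_1(a)\,J_0(b)=0,\qquad a=x_0\sqrt{2\sigma},\quad b=(1-x_0)\sqrt{2\sigma},
\]
whose smallest positive root is $\sigma_1(T_{x_0})$; in particular $\sigma_1(T_0)=\tfrac12 j_{1,1}^2$, and the symmetry $x_0\leftrightarrow 1-x_0$ reduces matters to $[0,1/2]$. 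The difficulty is to show that $x_0\mapsto\sigma_1(T_{x_0})$ is strictly increasing on $[0,1/2]$; I would obtain this either by implicit differentiation of the transcendental condition (a Hadamard-type formula for the first eigenvalue as the kink moves), controlling the sign through monotonicity of $J_0,J_1$, or by a direct comparison based on the Hardy-type characterization of $\sigma_1(h)$ recalled in \autoref{prop: sigma=hardy}.

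Finally, for the uniqueness statement I would observe that the Jensen/Choquet chain only yields $\sigma_1(h)\ge\sigma_1(T_0)$, so strictness needs a separate equality analysis. If the representing measure $\mu$ charges $(0,1)$, then $\int\sigma_1(T_{x_0})\,d\mu>\sigma_1(T_0)$ by the strict inequality of step three, forcing $\sigma_1(h)>\sigma_1(T_0)$. If instead $\mu$ is supported on $\{0,1\}$, then $h=\lambda T_0+(1-\lambda)T_1$ is affine, and strict concavity of $\sigma_1$ along the segment $[T_0,T_1]$ (on which, for instance, $\sigma_1(1)=\pi^2>\sigma_1(T_0)$) rules out every $\lambda\in(0,1)$, leaving $h\in\{T_0,T_1\}$ as the only minimizers.
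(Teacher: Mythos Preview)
Your plan coincides with the paper's proof: concavity of $h\mapsto\sigma_1(h)$ reduces the minimization to the extreme points of $\prof_1$, these are identified as the triangles $T_{x_0}$, and the one-parameter minimization is then carried out by implicit differentiation of the Bessel transcendental condition \eqref{eq:sigmaT}, using the sign information on $J_0,J_0'$ in the relevant range. The only packaging difference is that the paper shows directly (\autoref{prop: minextr}) that any minimizer must itself be an extreme point---equality in the concavity inequality forces the eigenfunction of $h$ to be a common eigenfunction of $h_0,h_1$, and the Euler--Lagrange equations then give $h_0=h_1=h$---which yields uniqueness immediately, whereas your Choquet/Jensen route needs the separate endgame you sketched.
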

We prove the theorem following two distinct approaches. \autoref{sec: stek1} is devoted to the former, while \autoref{sec: stek2} is devoted to the latter, which relies on a rearrangement method that, up to our knowledge, appears to be new. Finally, in \autoref{sec: mi/sigma} we establish a relationship between $\mu_1(h)$ and $\sigma_1(h)$.
\begin{teor}
\label{prop: mu=sigma}
    There exists an invertible operator
    \[
    \mathcal{G}:\prof\to \prof
    \]
    such that, for every $h,k\in\prof$, we have
    \begin{equation}
    \label{eq: neumtosigma}
    \left(\int_0^1 h(t)\,dt\right)^2 \mu_1(h)=\sigma_1\left(\mathcal{G}(h)\right),
    \end{equation}
    and
    \begin{equation}
    \label{eq: sigmatoneum}
    \left(\int_0^1 \frac{1}{\sqrt{k(t)}}\,dt\right)^{2}\sigma_1(k)=\mu_1(\mathcal{G}^{-1}(k)).
    \end{equation}
\end{teor}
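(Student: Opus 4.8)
The plan is to realize $\mathcal{G}$ as a Liouville-type change of the independent variable that absorbs the weight $h$ appearing on the right-hand side of the Neumann problem \eqref{eq:Neumann}, turning it into the constant weight of the Steklov problem \eqref{eq:Steklov}. Concretely, for $h\in\prof$ set $\|h\|_1=\int_0^1 h(t)\,dt$ and define the strictly increasing homeomorphism
\[
y(x)=\frac{1}{\|h\|_1}\int_0^x h(t)\,dt,\qquad y\colon[0,1]\to[0,1],
\]
with inverse $x(\cdot)$; here one uses that a non-negative concave $h\not\equiv 0$ is strictly positive on $(0,1)$, so that $y$ is a genuine homeomorphism, with possibly vanishing derivative at the endpoints. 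I then set
\[
\mathcal{G}(h)(y):=\bigl(h(x(y))\bigr)^2.
\]

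First I would rewrite the two eigenvalues through the Rayleigh quotients associated with \eqref{eq:Neumann} and \eqref{eq:Steklov},
\[
\mu(h)=\inf_{\substack{v\in H^1(0,1)\\ \int_0^1 h v=0}}\frac{\int_0^1 h\,(v')^2\,dx}{\int_0^1 h\,v^2\,dx},\qquad \sigma(k)=\inf_{\substack{w\in H^1(0,1)\\ \int_0^1 w=0}}\frac{\int_0^1 k\,(w')^2\,dy}{\int_0^1 w^2\,dy},
\]
and perform the substitution $v(x)=w(y(x))$. Since $dy=(h/\|h\|_1)\,dx$, a direct computation gives
\[
\int_0^1 h\,(v')^2\,dx=\frac{1}{\|h\|_1}\int_0^1 h(x(y))^2\,(w')^2\,dy,\qquad \int_0^1 h\,v^2\,dx=\|h\|_1\int_0^1 w^2\,dy,
\]
while the orthogonality constraint transforms as $\int_0^1 h\,v\,dx=\|h\|_1\int_0^1 w\,dy$, so that $\int_0^1 h v=0\iff\int_0^1 w=0$. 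Dividing, the factor $\|h\|_1^{-2}$ comes out and the remaining quotient is exactly the Steklov quotient with weight $k=\mathcal{G}(h)$, whence $\|h\|_1^2\,\mu(h)=\sigma(\mathcal{G}(h))$, which is \eqref{eq: neumtosigma}.

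It then remains to check that $\mathcal{G}$ maps $\prof$ into itself and is invertible. Non-negativity and $\not\equiv 0$ are clear; for concavity I would compute, using $y'(x)=h(x)/\|h\|_1$ and writing $H(y)=h(x(y))$,
\[
\frac{d}{dy}\bigl(H^2\bigr)=2\,\|h\|_1\,h'(x(y)),\qquad \frac{d^2}{dy^2}\bigl(H^2\bigr)=\frac{2\,\|h\|_1^2\,h''(x(y))}{h(x(y))}\le 0,
\]
since $h$ is concave; thus $\mathcal{G}(h)=H^2\in\prof$. For the inverse, given $k\in\prof$ I would read off $h(x(y))=\sqrt{k(y)}$ and reconstruct the change of variables by integrating $x'(y)=\|h\|_1/\sqrt{k(y)}$; imposing $x(1)=1$ forces $\|h\|_1=\bigl(\int_0^1 k^{-1/2}\bigr)^{-1}$, and one verifies this is consistent with $\int_0^1 h=\|h\|_1$. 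The analogous second-derivative computation (with the roles reversed) gives $h''=k''\sqrt{k}/(2\|h\|_1^2)\le 0$, so $\mathcal{G}^{-1}(k)\in\prof$ and $\mathcal{G}$ is a bijection. Finally, substituting $h=\mathcal{G}^{-1}(k)$ into \eqref{eq: neumtosigma} and using $\|h\|_1=\bigl(\int_0^1 k^{-1/2}\bigr)^{-1}$ yields \eqref{eq: sigmatoneum}.

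The main obstacles are analytic rather than algebraic: making the change of variables rigorous when $h$, and hence $y'$, degenerates at the endpoints — in particular justifying the boundary terms and the convergence of $\int_0^1 k^{-1/2}$, which holds because $k$ vanishes at worst linearly there — and establishing the concavity computations for merely concave, not necessarily $C^2$, profiles, where $h''$ is a non-positive measure and the identities above should be read in a distributional or one-sided-derivative sense.
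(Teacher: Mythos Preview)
Your proposal is correct and follows essentially the same route as the paper: the operator $\mathcal{G}(h)=h^2\circ H^{-1}$ with $H(x)=\|h\|_1^{-1}\int_0^x h$, the same change of variables in the Rayleigh quotients, and the same construction of $\mathcal{G}^{-1}$ via $\int_0^1 k^{-1/2}$. The only noteworthy differences are that the paper obtains the equality $\|h\|_1^2\mu(h)=\sigma(\mathcal{G}(h))$ by two inequalities (testing with the $\mu$- and $\sigma$-eigenfunctions) rather than arguing the test-function correspondence is a bijection, and it proves concavity of $\mathcal{G}(h)$ by observing $(\mathcal{G}(h))'=2\|h\|_1\,h'\circ H^{-1}$ is nonincreasing (since $h'$ is nonincreasing a.e.\ and $H^{-1}$ is increasing), which sidesteps the $C^2$ issue you flagged.
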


It may help to solve problems obtained by studying \eqref{problema0} among thinning domains, namely
\begin{equation*}
\min_{h\in \prof}\ddfrac{\mu_1(h)\int_0^1 h(t)\,dt}{\sigma_1(h)}, \qquad \qquad \max_{h\in\prof}\ddfrac{\mu_1(h)\int_0^1 h(t)\,dt}{\sigma_1(h)}.
\end{equation*}
In particular, we can fully solve the maximizing problem, and partially solve the minimizing problem. We summarize these results in the following theorem.

\begin{teor}\label{teor: mu/sigma}
    Let $h\in \prof_1$. Then 
    \[
        \frac{\mu_1(h)}{\sigma_1(h)}\le 2,
    \]
   and the equality holds if and only if $h=T_{x_0}$ for some $x_0\in [0,1]$. If, in addition, $h(x)=h(1-x)$ for every $x\in[0,1]$, then
   \[
        \frac{\mu_1(h)}{\sigma_1(h)}\ge 1.
   \]
\end{teor}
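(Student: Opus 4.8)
The two assertions are of different nature, and I would prove them by different means. For the upper bound I would use Theorem~\ref{prop: mu=sigma} to convert the spectral inequality into a \emph{pointwise} inequality between profiles. Since $h\in\prof_1$ satisfies $\int_0^1 h=1$, relation \eqref{eq: neumtosigma} reads $\mu_1(h)=\sigma_1(\mathcal G(h))$; moreover, as in the construction of $\mathcal G$, the operator is realized by the change of variables $y=H(x)$, with $H(x):=\int_0^x h(r)\,dr$, under which $\mathcal G(h)(y)=h\!\left(x(y)\right)^2$, where $x(\cdot)=H^{-1}$. Writing $g:=\mathcal G(h)$ and using that $\sigma_1$ is monotone in the profile and one-homogeneous, i.e. $\sigma_1(a)\le\sigma_1(b)$ whenever $0\le a\le b$ on $[0,1]$ and $\sigma_1(2h)=2\sigma_1(h)$, the bound $\mu_1(h)\le 2\sigma_1(h)$ reduces to the pointwise estimate $g(y)\le 2h(y)$ for every $y\in[0,1]$.

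The substitution $y=H(s)$ turns $g(y)\le 2h(y)$ into the clean inequality
\begin{equation}\label{eq:pointwise}
h(s)^2\le 2\,h\!\left(H(s)\right)\qquad\text{for all }s\in[0,1],
\end{equation}
which I would establish using only concavity of $h$ and $H(s)\in[0,1]$, splitting on the sign of $H(s)-s$. If $H(s)\le s$, concavity on $[0,s]$ gives $h(H(s))\ge h(s)\,H(s)/s$ (the chord bound, discarding the nonnegative contribution of $h(0)$), while the concave trapezoidal estimate $\int_0^s h\ge\frac{s}{2}(h(0)+h(s))\ge\frac{s}{2}h(s)$ yields $H(s)/s\ge h(s)/2$; combining gives \eqref{eq:pointwise}. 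If instead $H(s)>s$, the symmetric argument on $[s,1]$, using $h(H(s))\ge h(s)\,(1-H(s))/(1-s)$ together with $\int_s^1 h\ge\frac{1-s}{2}h(s)$, again yields \eqref{eq:pointwise}. Each chord/trapezoidal estimate is an equality precisely when $h$ is affine on the relevant subinterval and vanishes at the corresponding endpoint; forcing equality for all $s$ (which $g=2h$ demands) then pins $h$ down to a profile that is affine on each side and vanishes at an endpoint, i.e. $h=T_{x_0}$. Conversely one checks directly that $\mathcal G(T_{x_0})=2T_{x_0}$, so the constant $2$ is attained exactly on the triangles.

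For the lower bound under the symmetry $h(x)=h(1-x)$ I would argue variationally. By simplicity of the first eigenvalue the principal Neumann eigenfunction $u$ is antisymmetric, $u(1-x)=-u(x)$, hence $\int_0^1 u=0$ and $u$ is admissible in the Rayleigh quotient for $\sigma_1$; using $\int_0^1 h(u')^2=\mu_1(h)\int_0^1 h u^2$ this gives
\[
\sigma_1(h)\le\frac{\int_0^1 h(u')^2}{\int_0^1 u^2}=\mu_1(h)\,\frac{\int_0^1 h u^2}{\int_0^1 u^2},
\]
so it suffices to prove $\int_0^1(1-h)u^2\ge 0$. By symmetry this equals $2\int_0^{1/2}(1-h)u^2$, and on $[0,1/2]$ both $1-h$ (as $h$ is concave with maximum at $1/2$) and $u^2$ (as $u$ is monotone and vanishes at $1/2$) are nonincreasing, hence similarly ordered. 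Chebyshev's integral inequality bounds $\int_0^{1/2}(1-h)u^2$ below by $2\big(\int_0^{1/2}(1-h)\big)\big(\int_0^{1/2}u^2\big)$, whose first factor vanishes since $\int_0^{1/2}h=\tfrac12\int_0^1 h=\tfrac12$. This yields $\mu_1(h)\ge\sigma_1(h)$.

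The main obstacle is the upper bound, and specifically the recognition that Theorem~\ref{prop: mu=sigma} collapses the sharp spectral inequality $\mu_1\le 2\sigma_1$ into the elementary pointwise inequality \eqref{eq:pointwise}: once this reduction is in place, monotonicity of $\sigma_1$ does the rest and the concavity proof of \eqref{eq:pointwise} is short. I would expect the delicate points to be (i) justifying the explicit form $\mathcal G(h)(y)=h(x(y))^2$ and the validity of the change of variables when $h$ degenerates at the endpoints (harmless, since a nonzero concave $h$ is strictly positive on $(0,1)$, so $H$ is a bijection), and (ii) extracting the full equality case $h=T_{x_0}$, where one must combine strictness in the monotonicity of $\sigma_1$ (the Steklov principal eigenfunction has $\phi'\ne 0$ a.e., so $g\le 2h$ with $\sigma_1(g)=\sigma_1(2h)$ forces $g=2h$ a.e.) with the equality analysis of the chord estimates.
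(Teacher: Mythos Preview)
Your argument is correct. For the upper bound your route matches the paper's: reduce via $\mu_1(h)=\sigma_1(\mathcal G(h))$ and monotonicity of $\sigma_1$ to the pointwise estimate $\mathcal G(h)\le 2h$ (the paper's Proposition~\ref{prop: g<2h}), and read off the equality case from $\mathcal G(h)=2h$ (Proposition~\ref{g=2h}). The only difference is how the pointwise bound is established: the paper locates the fixed point $\bar x$ of $H$, compares $g'=2h'\circ H^{-1}$ with $2h'$ on either side of $\bar x$, and integrates from the endpoints where $g=h^2\le 2h$ thanks to $\|h\|_\infty\le 2$; your chord/trapezoid argument is an equivalent concavity computation that avoids differentiating $h$. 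For the lower bound the approaches genuinely diverge. The paper again passes through $\mathcal G$: it invokes the Hardy-type reformulation of Proposition~\ref{prop: sigma=hardy}, takes the positive concave minimizer $w$ for $\sigma_1(g)$, tests it in the quotient for $\sigma_1(h)$, and is left with $\int w^2/g\le\int w^2/h$, which follows from the change of variables $x=H(y)$ together with $H(y)\le y$ on $[0,1/2]$ and the quasi-concavity of $w$. Your proof sidesteps both $\mathcal G$ and Proposition~\ref{prop: sigma=hardy}: testing $\sigma_1(h)$ with the antisymmetric Neumann eigenfunction and applying Chebyshev's inequality to the similarly ordered pair $(1-h,u^2)$ on $[0,1/2]$ is shorter and self-contained, at the cost of not exhibiting the lower bound as another instance of the $\mathcal G$-correspondence that organizes Section~\ref{sec: mi/sigma}.
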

\section{Notations and tools}

Here we define standard quantities for convex sets and the formal definition of thin domain. This definition passes through the ones of  support function and  minimal width (or thickness).

We refer to \cite{HM22} for the proof of the lemmas in this section.

\begin{defi}
\label{support}
  Let $\Omega \subset \R^N$ be a bounded, open, and convex set. We define the \emph{support function} of $\Omega$ as
  \begin{equation*}
    h_\Omega(y)=\sup_{x\in \Omega}\left(x\cdot y\right), \qquad y\in \mathbb{R}^n .
  \end{equation*}
\end{defi}

\begin{defi}
  Let $\Omega \subset \R^N$ be a bounded, open and convex set, and  let $y \in \mathbb{R}^n$. We define the \emph{width} of $\Omega$ in the direction $y$ as 
  \begin{equation*}
    \omega_{\Omega}(y)=h_{\Omega}(y)+h_{\Omega}(-y)
    \end{equation*}
 and we define the \emph{minimal width} of $\Omega$ as
\begin{equation*}
    w_\Omega=\min\{  \omega_{\Omega}(y)\,|\; y\in\mathbb{S}^{n-1}\}.
\end{equation*}
\end{defi}
Hence, if $\diam(\Omega)$ denotes the diameter of $\Omega$, then we have
\begin{defi}
     Let $\Omega_\delta\subset \R^n$ be a family of non-empty, bounded, open, and convex sets. We say that $\Omega_\delta$ is a family of \emph{thinning domains} if
    \begin{equation*}
        \lim_{\delta\to 0}\, \dfrac{w_{\Omega_\delta}}{\diam(\Omega_\delta)}=0.
    \end{equation*}
     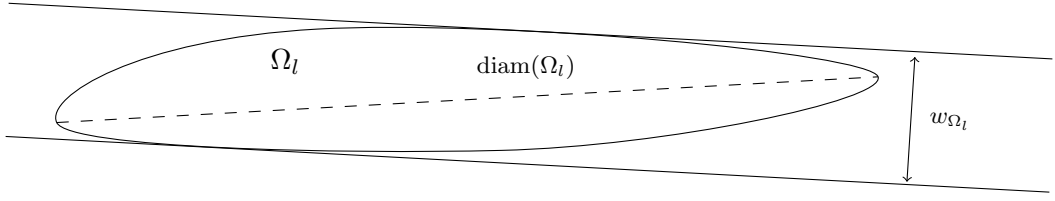
\begin{figure}[h!]
\begin{center}
 \begin{tikzpicture}[x=0.75pt,y=0.75pt,yscale=-1,xscale=1]
\draw   (147.5,317) .. controls (141.5,304) and (187.5,275) .. (277.5,270) .. controls (367.5,265) and (553.5,282) .. (557.5,294) .. controls (561.5,306) and (450.5,329) .. (369.5,331) .. controls (288.5,333) and (153.5,330) .. (147.5,317) -- cycle ;
\draw    (123.5,257) -- (644.5,285) ;
\draw    (122,324) -- (643,352) ;
\draw   [<->]  (575.87,284.05) -- (571.9,346.79) ;
\draw  [dash pattern={on 4.5pt off 4.5pt}]  (147.5,317) -- (557.5,294) ;

\draw (356,282) node [anchor=north west][inner sep=0.75pt]   [align=left] {\footnotesize $\diam(\Omega_l)$};
\draw (582,311) node [anchor=north west][inner sep=0.75pt]   [align=left] {\footnotesize $w_{\Omega_l}$};
\draw (253,279) node [anchor=north west][inner sep=0.75pt]   [align=left] {$\Omega_l$};
\end{tikzpicture}
\end{center}
\caption{Minimal width and diameter of a convex set.} \label{fig:M1}
\end{figure}
\end{defi}

Let us now consider a particular family of thinning domains. Let $h^+,h^-\in \prof$ such that $h^++h^-\in\prof_1$. We consider the family of thinning domains 
\begin{equation}
    \label{omegaeps}
    \Omega_\varepsilon= \Set{(x,y) \in \R^2 | \begin{aligned}\,  0 \leq \,&x\leq 1, \\ -\varepsilon h_-(x) \leq \,&y\leq \varepsilon h_+(x)\end{aligned}}.
\end{equation}
For such a sequence we have that both the principal eigenvalues of the Neumann and Steklov problems converge to the principal eigenvalues of the Sturm-Liouville problems \eqref{eq:Neumann} and \eqref{eq:Steklov} respectively. 
More precisely, if we define

\noindent\begin{minipage}{0.5\linewidth}
\begin{equation}
\label{eigenmu}
  \mu_1(h)=\inf_{\substack{u\in H^1(0,1) \\\int_0^1uh\,dx=0}}\dfrac{\displaystyle\int_0^1  (u')^2 h\,dx}{\displaystyle\int_0^1 u^2 h\,dx},
\end{equation}
\end{minipage}%
\begin{minipage}{0.5\linewidth}
\begin{equation}
\label{eigensigma}
 \sigma_1(h)=\inf_{\substack{v\in H^1(0,1) \\\int_0^1v\,dx=0}}\dfrac{\displaystyle\int_0^1  (v')^2h\,dx}{\displaystyle\int_0^1 v^2 \,dx},
\end{equation}
\end{minipage}\par\vspace{\belowdisplayskip}
we have the following lemmas
\begin{lemma}
\label{lem: limitproblem}
Let $\{\Omega_\varepsilon\}$ be family of thinning domains as in \eqref{omegaeps} and let $h=h_{-}+h_{+}$.
Then 
\begin{align*}
    \mu_1(\Omega_\varepsilon) &= \mu_1(h) + o(1) \text{ as } \varepsilon\to 0,\\
    \sigma_1(\Omega_\varepsilon) &= \frac{\sigma_1(h)}{2}\varepsilon + o(\varepsilon) \text{ as } \varepsilon\to 0.
\end{align*}
\end{lemma}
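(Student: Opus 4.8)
The plan is to prove both limits by bracketing the two Rayleigh quotients between matching upper and lower bounds. The computations become transparent after flattening $\Omega_\varepsilon$ onto the fixed rectangle $(0,1)^2$ through the diffeomorphism $(x,t)\mapsto\big(x,\,\varepsilon(h(x)t-h_-(x))\big)$, whose Jacobian is $\varepsilon h(x)$, where $h=h_-+h_+\in\prof_1$. Writing $\tilde w(x,t)$ for the pullback of $w$, a direct computation gives
\[
\frac{1}{\varepsilon}\int_{\Omega_\varepsilon}\abs{\nabla w}^2\,dx\,dy=\int_0^1\!\!\int_0^1\left[h\left(\partial_x\tilde w-\frac{h'(x)t-h_-'(x)}{h}\,\partial_t\tilde w\right)^2+\frac{1}{\varepsilon^2 h}(\partial_t\tilde w)^2\right]dt\,dx,
\]
together with $\frac1\varepsilon\int_{\Omega_\varepsilon}w^2=\int_0^1\!\int_0^1 h\,\tilde w^2\,dt\,dx$ and, for the boundary term, $\int_{\partial\Omega_\varepsilon}w^2\,d\mathcal H^1=\int_0^1\big(\tilde w(x,1)^2+\tilde w(x,0)^2\big)\,dx+o(1)$, the error coming from the arclength factors $\sqrt{1+\varepsilon^2(h_\pm')^2}$ (whose integrals converge to $1$ since $\int_0^1|h_\pm'|<\infty$) and from the two vertical sides, of total length $O(\varepsilon)$.

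For the upper bounds I would test with $t$-independent competitors $\tilde w(x,t)=u(x)$, $u\in H^1(0,1)$. Then $\partial_t\tilde w\equiv0$, so the penalization term disappears and the energy equals exactly $\int_0^1(u')^2h\,dx$, while the Neumann mean constraint $\int_{\Omega_\varepsilon}w=0$ reads $\int_0^1 uh=0$; the infimum over this subclass is precisely $\mu_1(h)$, giving $\mu_1(\Omega_\varepsilon)\le\mu_1(h)$ for every $\varepsilon$. For Steklov the energy is again $\int_0^1(u')^2h$ and the boundary mass tends to $2\int_0^1u^2$, so $\varepsilon^{-1}$ times the quotient converges to $\tfrac12\int_0^1(u')^2h\big/\int_0^1u^2$; the constraint $\int_{\partial\Omega_\varepsilon}w=0$ is only approximately $\int_0^1u=0$, but subtracting the (vanishing) weighted boundary mean of $u$ fixes it without altering the energy, and optimizing over $u$ yields $\limsup_\varepsilon\varepsilon^{-1}\sigma_1(\Omega_\varepsilon)\le\sigma_1(h)/2$.

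For the lower bounds I would take normalized minimizers and work with their pullbacks $\tilde w_\varepsilon$. Since the energy is $O(1)$ (Neumann) or $O(\varepsilon)$ (Steklov), boundedness of the penalization term $\varepsilon^{-2}\int\!\int h^{-1}(\partial_t\tilde w_\varepsilon)^2$ forces $\partial_t\tilde w_\varepsilon\to0$ in the weighted $L^2$; hence $\tilde w_\varepsilon$ becomes asymptotically independent of $t$ and, along a subsequence, converges weakly in the weighted space $H^1(h\,dx)$ to a function $u=u(x)$. The normalizations and constraints pass to the limit ($\int_0^1hu^2=1$ and $\int_0^1hu=0$ for Neumann; $\int_0^1u^2=\tfrac12$ and $\int_0^1u=0$ for Steklov), the boundary normalization being transferred to $u$ via the fiber trace inequality, which controls $\tilde w_\varepsilon(\cdot,1)$ and $\tilde w_\varepsilon(\cdot,0)$ by $u$ once $\partial_t\tilde w_\varepsilon\to0$. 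Finally, discarding the two nonnegative $(\partial_t\tilde w_\varepsilon)^2$-terms and noting that $\partial_t\tilde w_\varepsilon\to0$ kills the cross term, weak lower semicontinuity of the convex functional $\tilde w\mapsto\int_0^1\!\int_0^1h(\partial_x\tilde w)^2$ gives $\liminf\ge\int_0^1(u')^2h$, that is $\liminf_\varepsilon\mu_1(\Omega_\varepsilon)\ge\mu_1(h)$ and $\liminf_\varepsilon\varepsilon^{-1}\sigma_1(\Omega_\varepsilon)\ge\sigma_1(h)/2$. Combined with the upper bounds, this yields both asymptotics.

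The main obstacle is the degeneracy of the profile: a generic $h\in\prof_1$ vanishes at an endpoint and there $h'$ may blow up, so the weight $h^{-1}$ and the coefficient $(h't-h_-')/h$ are singular and neither the control of the cross term by $\partial_t\tilde w_\varepsilon$ nor the weak lower semicontinuity is automatic. I would therefore first carry out the argument for profiles that are Lipschitz and bounded away from zero, for which $(h')^2/h\in L^\infty$ and all the weighted Poincaré and trace constants are uniform, and then remove the restriction by approximating a general $h$ from within $\prof_1$—truncating near the endpoints—while controlling the endpoint contributions through weighted Hardy inequalities and the continuity of $h\mapsto\mu_1(h),\sigma_1(h)$, closing with a diagonal extraction. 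It is precisely these uniform weighted estimates near the degeneracy, rather than the bulk computation, where the real difficulty lies.
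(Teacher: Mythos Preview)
The paper does not prove this lemma at all: immediately before its statement the authors write ``We refer to \cite{HM1} for the proof of the lemmas in this section,'' so there is no in-paper argument to compare your proposal against---the result is simply imported from Henrot--Michetti.

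Your outline is the standard thin-domain argument and is essentially what one finds in that reference and in the collapsing-domain literature generally: flatten to a fixed rectangle, obtain the upper bound from $t$-independent test functions, and deduce the lower bound from the fact that the $\varepsilon^{-2}$ penalization forces $\partial_t\tilde w_\varepsilon\to 0$, after which weak lower semicontinuity applies. You have also correctly located the only genuine difficulty, namely the possible vanishing of $h$ at an endpoint, which makes $h^{-1}$ and $(h't-h_-')/h$ singular; the cure via approximation by non-degenerate profiles together with the $L^2$-continuity of $h\mapsto\mu_1(h),\sigma_1(h)$ (Lemma~\ref{lem: continuity}) is exactly the right one.

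One point to make explicit in the lower-bound step: after expanding the square, the cross term
\[
-2\int_0^1\!\!\int_0^1(h't-h_-')\,\partial_x\tilde w_\varepsilon\,\partial_t\tilde w_\varepsilon\,dt\,dx
\]
is signed, so it cannot be ``discarded'' alongside the two nonnegative $(\partial_t\tilde w_\varepsilon)^2$ contributions. You must instead show it tends to zero, which follows from the quantitative bound $\int h^{-1}(\partial_t\tilde w_\varepsilon)^2=O(\varepsilon^2)$ together with a uniform control on $\|(h't-h_-')\,\partial_x\tilde w_\varepsilon\|_{L^2}$; the latter is precisely where the non-degeneracy assumption (or your approximation scheme) is needed.
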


The following compactness result for $\mathcal{P}$ holds true
\begin{lemma}
\label{lem: compact}
    Let $h_n\in\prof_1$ be a sequence of functions, then there exists $h\in\prof$ such that, up to a subsequence, we have:
    \begin{itemize}
        \item $h_n$ converges to $h$ in $L^2(0,1)$;
        \item $h_n$ converges to $h$ uniformly on every compact subset of $(0,1)$.
    \end{itemize}
\end{lemma}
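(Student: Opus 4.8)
The plan is to show that the family $\prof_1$ is uniformly bounded and locally equi-Lipschitz, extract a locally uniform limit by Arzelà--Ascoli, and then upgrade that convergence to $L^2(0,1)$ while certifying that the limit is non-trivial, hence belongs to $\prof$.

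First I would establish a uniform sup bound. Fix $h\in\prof_1$ and $x_0\in(0,1)$. By concavity together with $h\ge 0$, one has $h(x)\ge \frac{x}{x_0}\,h(x_0)$ for $x\in[0,x_0]$ and $h(x)\ge \frac{1-x}{1-x_0}\,h(x_0)$ for $x\in[x_0,1]$; that is, $h$ dominates the tent function of height $h(x_0)$ peaking at $x_0$ and vanishing at the endpoints, whose integral equals $h(x_0)/2$. Integrating and using $\int_0^1 h=1$ gives $h(x_0)/2\le 1$, so $0\le h\le 2$ on $(0,1)$, uniformly over $\prof_1$. Next, for local equi-Lipschitz bounds, fix $[a,b]\subset(0,1)$. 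Since difference quotients of a concave function are monotone, for $x\in[a,b]$ the right derivative satisfies $h_+'(x)\le \frac{h(x)-h(s)}{x-s}$ for $0<s<x$; letting $s\to 0^+$ and using $0\le h\le 2$ yields $h_+'(x)\le 2/a$. Symmetrically, comparing with a point $t\to 1^-$ gives $h_+'(x)\ge -2/(1-b)$. Hence every $h\in\prof_1$ is $L$-Lipschitz on $[a,b]$ with $L=2\max\{1/a,\,1/(1-b)\}$, a constant independent of $h$.

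With the uniform bound and local equi-Lipschitz continuity in hand, I would apply Arzelà--Ascoli on each interval $[1/k,\,1-1/k]$ and diagonalize to extract a subsequence (still denoted $h_n$) converging uniformly on every compact subset of $(0,1)$ to some function $h$. The limit is non-negative, concave, and satisfies $h\le 2$, since all three properties pass to locally uniform limits. To see $h\not\equiv 0$, I would invoke the Hermite--Hadamard inequality for concave functions, which gives $h_n(1/2)\ge \int_0^1 h_n=1$ for every $n$; passing to the limit yields $h(1/2)\ge 1$, so $h\in\prof$. Finally, to obtain $L^2(0,1)$ convergence I would split the interval into the central part $[\delta,\,1-\delta]$, where uniform convergence makes $\int_\delta^{1-\delta}|h_n-h|^2$ arbitrarily small, and the two boundary layers of total measure $2\delta$, where the uniform bound gives $|h_n-h|\le 2$ and hence $\int|h_n-h|^2\le 4\cdot 2\delta=8\delta$; choosing $\delta$ small and then $n$ large forces $\|h_n-h\|_{L^2}\to 0$.

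The convexity estimates and the Arzelà--Ascoli extraction are routine. The one point that needs genuine care is ruling out loss of mass toward the endpoints, which could a priori force the locally uniform limit to vanish (think of a tent $T_{x_0}$ with $x_0\to 0$, whose interior values tend to those of $T_0$ but which could conceivably collapse). This is exactly what the Hermite--Hadamard bound $h_n(1/2)\ge 1$ prevents, pinning down a fixed amount of mass at the midpoint, and it is the crux of the argument.
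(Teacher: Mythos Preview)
Your argument is correct. The paper does not supply its own proof of this lemma; it states at the beginning of Section~2 that the proofs of the lemmas there are deferred to \cite{HM1}. What you wrote is the standard compactness argument for normalized concave profiles, and each step checks out: the tent-function lower bound gives the uniform cap $0\le h\le 2$ (this is exactly inequality~\eqref{eq: h<2} proved later in the paper within Proposition~\ref{prop: triextr}); monotonicity of difference quotients for concave functions gives the local Lipschitz bound; Arzel\`a--Ascoli plus a diagonal extraction yields locally uniform convergence; Hermite--Hadamard pins $h_n(1/2)\ge 1$ and prevents the limit from degenerating to zero; and the $L^2$ convergence follows by splitting into a compact core and small boundary layers with the uniform bound. (Your estimate $|h_n-h|\le 2$ on the boundary layers is indeed valid because both functions lie in $[0,2]$, so their difference lies in $[-2,2]$.)
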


We also recall a continuity property of the eigenvalues $\mu_1(h)$ and $\sigma_1(h)$.

\begin{lemma}
\label{lem: continuity}
Let $h_n,h\in\prof$ be a sequence such that $h_n$ converges in $L^2(0,1)$ to $h$. Then we have
\begin{align*}
    \lim_{n}\mu_1(h_n)= \mu_1(h),\\
    \lim_{n}\sigma_1(h_n)= \sigma_1(h).
\end{align*}
\end{lemma}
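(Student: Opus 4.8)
The plan is to prove each of the two identities separately, and for each to establish upper and lower semicontinuity of the eigenvalue along the sequence. Throughout I will exploit three elementary features of $\prof$. First, every $h\in\prof$ is bounded on $(0,1)$ (a non-negative concave function cannot blow up at an endpoint) and is strictly positive on the open interval, vanishing at most at $0$ and/or $1$. Second, since $h_n\to h$ in $L^2(0,1)$ and the $h_n$ are concave, the convergence is in fact uniform on every compact subset of $(0,1)$ (this is the mechanism behind \autoref{lem: compact}, applied to every subsequence, whose locally uniform limit must be the $L^2$-limit $h$). Third, by scale invariance of the Rayleigh quotients together with the extremal inequalities recalled in the introduction, the eigenvalues stay in a fixed bounded range, namely $\pi^2\le\mu_1(h_n)\le\mu_1(T_{1/2})$ and $\sigma_1(h_n)=\bigl(\int_0^1 h_n\bigr)\,\sigma_1\bigl(h_n/\!\int_0^1 h_n\bigr)\le 12\int_0^1 h_n\,dx\le C$.

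For upper semicontinuity of $\mu_1$ fix $\delta>0$ and take $w\in H^1(0,1)$ with $\int_0^1 wh\,dx=0$ whose Rayleigh quotient for the weight $h$ is below $\mu_1(h)+\delta$. By density of Lipschitz functions in $H^1(0,1)$ and continuity of both weighted integrals under $H^1$-convergence (the weight being bounded), I may take $w$ Lipschitz. To make it admissible for $h_n$ I set $w_n=w-a_n$ with $a_n=\bigl(\int_0^1 wh_n\bigr)/\bigl(\int_0^1 h_n\bigr)$, so that $\int_0^1 w_n h_n=0$; since $h_n\to h$ in $L^1$ and $w$ is bounded, $a_n\to(\int_0^1 wh)/(\int_0^1 h)=0$. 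As $w_n'=w'$ is bounded, the convergence $h_n\to h$ in $L^1$ gives $\int_0^1 (w')^2 h_n\to\int_0^1 (w')^2 h$, while $w_n\to w$ uniformly yields $\int_0^1 w_n^2 h_n\to\int_0^1 w^2 h>0$. Hence the Rayleigh quotient of $w_n$ for $h_n$ tends to that of $w$ for $h$, so $\limsup_n\mu_1(h_n)\le\mu_1(h)+\delta$, and $\delta\to0$ finishes. The argument for $\sigma_1$ is identical, the only difference being that the constraint $\int_0^1 v=0$ does not involve $h_n$, so no adjustment is needed and the denominator $\int_0^1 w^2$ is untouched.

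For the reverse inequality I pass to a subsequence along which $\mu_1(h_n)$ converges to $\liminf_n\mu_1(h_n)$, and choose almost-minimizers $u_n\in H^1(0,1)$ with $\int_0^1 u_n h_n=0$, $\int_0^1 u_n^2 h_n=1$ and $\int_0^1 (u_n')^2 h_n=\mu_1(h_n)+o(1)$ bounded. On a compact $[a,1-a]\subset(0,1)$ one has $h_n\ge c_a>0$ for large $n$, hence $u_n$ is bounded in $H^1(a,1-a)$; a diagonal extraction produces $u\in H^1_{\loc}(0,1)$ with $u_n\rightharpoonup u$ weakly in $H^1_{\loc}$ and uniformly on compacts. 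Splitting $\int_a^{1-a}(u_n')^2 h_n=\int_a^{1-a}(u_n')^2 h+\int_a^{1-a}(u_n')^2(h_n-h)$, the last term vanishes (uniform convergence of $h_n$ and bounded energy) and the first is weakly lower semicontinuous, so letting $[a,1-a]\uparrow(0,1)$ gives $\int_0^1(u')^2 h\le\liminf_n\int_0^1(u_n')^2 h_n=\liminf_n\mu_1(h_n)$.

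It remains to transfer the normalization and the constraint to $u$, i.e. to rule out loss of mass at the endpoints where the weight collapses; this is the main obstacle. Near $0$ I use the estimate $h_n(x)\ge\frac{x}{a}h_n(a)$ for $x\in(0,a)$, a direct consequence of concavity and $h_n(0)\ge0$. By weighted Cauchy–Schwarz, $\int_0^a (u_n-u_n(a))^2 h_n\,dx\le\left(\int_0^a h_n(x)\int_x^a\frac{dt}{h_n(t)}\,dx\right)\int_0^a (u_n')^2 h_n\,dx$, and the kernel is $\le M\int_0^a\frac{t}{h_n(t)}\,dt\le M a^2/h_n(a)\le Ca$ uniformly for large $n$, since $h_n(a)\to h(a)\ge c\,a$. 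Together with $\int_0^a h_n\to0$ and the uniform bound on $u_n(a)$ from the $H^1(a,1-a)$ estimate, this forces $\int_0^a u_n^2 h_n\to0$ uniformly in $n$ as $a\to0$, and symmetrically at $1$. Consequently $\int_0^1 u_n^2 h_n\to\int_0^1 u^2 h=1$ and $\int_0^1 u_n h_n\to\int_0^1 uh=0$, so $u$ is admissible and $\int_0^1(u')^2 h\ge\mu_1(h)\int_0^1 u^2 h=\mu_1(h)$ (the infimum over $H^1(0,1)$ agreeing with the one over the weighted space by truncation near the endpoints). Hence $\liminf_n\mu_1(h_n)\ge\mu_1(h)$. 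The same scheme handles $\sigma_1$, with the unweighted denominator: tightness is now needed for $\int_0^a v_n^2$, and the identical concavity estimate gives $\int_0^a (v_n-v_n(a))^2\,dx\le C\int_0^a\frac{t}{h_n(t)}\,dt\le Ca\to0$ together with $\int_0^a v_n(a)^2\,dx\le Ca$, again uniformly. In both cases the essential difficulty is precisely this uniform endpoint tightness against a collapsing weight; once concavity is used to make the weighted Hardy kernel uniformly small, everything else is routine.
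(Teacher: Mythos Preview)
The paper does not actually prove this lemma; it is one of the statements for which the authors simply refer to \cite{HM1}. So there is no ``paper's proof'' to compare against, and your write-up is an independent argument. The overall strategy you follow --- upper semicontinuity by inserting a fixed (slightly adjusted) test function, lower semicontinuity by extracting a weak $H^1_{\loc}$ limit of almost-minimizers and controlling the mass near the endpoints via concavity --- is the natural one and most of the steps are correct.

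There is, however, a real gap in the endpoint tightness step. When you combine ``the uniform bound on $u_n(a)$ from the $H^1(a,1-a)$ estimate'' with $\int_0^a h_n\to 0$, you are tacitly treating the bound on $u_n(a)$ as independent of $a$. It is not: when $h$ vanishes at an endpoint one only has $\inf_{[a,1-a]}h_n\gtrsim a$, so the $H^1(a,1-a)$ bound gives merely $u_n(a)^2\lesssim a^{-1}$, and hence $u_n(a)^2\int_0^a h_n\lesssim a^{-1}\cdot a=O(1)$, which does not go to zero. The same defect appears verbatim in the $\sigma_1$ argument, where you claim $a\,v_n(a)^2\le Ca$ with an absolute $C$.

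Two clean repairs are available. The first is to use genuine eigenfunctions rather than almost-minimizers: integrating the equation gives $h_n(x)u_n'(x)=-\mu_1(h_n)\int_0^x h_n u_n$, and Cauchy--Schwarz with the normalization yields $\abs{h_n(x)u_n'(x)}\le C\sqrt{x}$; combined with the concavity bound $h_n(x)\ge 2h_n(1/2)\,x$ on $[0,1/2]$ this gives $\abs{u_n'(x)}\le C x^{-1/2}$, which is integrable, so the $u_n$ are \emph{uniformly bounded on $(0,1)$} and tightness is immediate (the same computation works for $\sigma_1$). The second repair keeps almost-minimizers but replaces the moving reference point $a$ by a fixed interior point: one gets $\abs{u_n(a)-u_n(1/2)}\le C\sqrt{\log(1/a)}$ from the weighted Cauchy--Schwarz and $\int_a^{1/2}h_n^{-1}\lesssim\log(1/a)$, and then $u_n(a)^2\int_0^a h_n\lesssim a\log(1/a)\to 0$, which suffices.
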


\subsection{Other tools}
Here we recall some other tools that will be useful in the next pages. We refer to \cite{M12,AFP00}.


 \begin{teor}[Coarea formula]
 \label{coarea}
 Let $\Omega \subset \mathbb{R}^n$ be an open set with Lipschitz boundary. Let $f\in W^{1,1}_{\text{loc}}(\Omega)$, and let $u:\Omega\to\R$ be a measurable function. Then,
 \begin{equation*}
   {\displaystyle \int _{\Omega}u(x)|\nabla f(x)|dx=\int _{\mathbb {R} }dt\int_{\Omega\cap f^{-1}(t)}u(y)\, d\mathcal {H}^{n-1}(y)}.
 \end{equation*}
 \end{teor}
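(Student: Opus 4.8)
The plan is to strip off the weight $u$ by elementary measure theory, thereby reducing the weighted statement to the purely geometric coarea identity for the constant weight $u\equiv 1$, and then to obtain that identity from the coarea formula for functions of bounded variation. First I would split $u=u^+-u^-$, so that it suffices to treat $u\ge 0$, and approximate such a $u$ from below by an increasing sequence of simple functions; monotone convergence applied to both sides of the claimed identity (on the right, first in the inner $\Hn$-integral and then in the outer integral in $t$) reduces everything to the case of an indicator $u=\chi_E$ with $E\subseteq\Omega$ measurable. For an indicator the two sides become the set functions
\[
\lambda(E):=\int_E \abs{\nabla f}\,dx,\qquad \nu(E):=\int_\R \Hn\!\left(E\cap f^{-1}(t)\right)dt,
\]
both of which are locally finite Borel measures on $\Omega$: $\lambda$ manifestly, and $\nu$ by Tonelli's theorem together with the countable additivity of $E\mapsto\Hn(E\cap f^{-1}(t))$ for each fixed $t$ (this step also supplies the measurability in $t$ that makes the outer integral meaningful). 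Being locally finite Borel, hence Radon, these measures are determined by their values on open sets, so it suffices to check $\lambda(A)=\nu(A)$ for open $A\subseteq\Omega$; on such an $A$ the equality is exactly the constant-weight statement with $A$ in place of $\Omega$.

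Thus the whole theorem follows once I prove the constant-weight identity
\begin{equation}\label{eq:coarea-const}
\int_\Omega \abs{\nabla f}\,dx=\int_\R \Hn\!\left(\Omega\cap f^{-1}(t)\right)dt.
\end{equation}
For \eqref{eq:coarea-const} I would invoke the theory of sets of finite perimeter. Since $f\in W^{1,1}_{\loc}$, one has $\abs{Df}(\Omega)=\int_\Omega\abs{\nabla f}\,dx$, so the left-hand side is the total variation of $f$. The coarea formula for $BV$ functions then gives $\abs{Df}(\Omega)=\int_\R P\!\left(\{f>t\};\Omega\right)dt$, where $P(\,\cdot\,;\Omega)$ is the relative perimeter and $\{f>t\}$ the superlevel set. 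Finally, by De Giorgi's structure theorem $P(\{f>t\};\Omega)=\Hn(\partial^*\{f>t\}\cap\Omega)$, and I would identify the reduced boundary $\partial^*\{f>t\}$ with the topological level set $f^{-1}(t)$ up to an $\Hn$-negligible set, for almost every $t$. Combining these three identities yields \eqref{eq:coarea-const}.

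The $BV$ coarea identity itself I would establish along one of two routes. The first is to approximate $f$ by smooth functions $f_k\to f$ (available globally because $\partial\Omega$ is Lipschitz), apply Sard's theorem and the classical area formula to each $f_k$ where the level sets are genuine $C^1$ hypersurfaces, and pass to the limit. The second, cleaner, route uses the duality characterization
\[
\abs{Df}(\Omega)=\sup\Set{\int_\Omega f\,\divv\phi\,dx | \phi\in C^1_c(\Omega;\R^n),\ \abs{\phi}\le 1}
\]
together with the layer-cake decomposition $Df=\int_\R D\chi_{\{f>t\}}\,dt$, which gives $\abs{Df}(\Omega)\le\int_\R P(\{f>t\};\Omega)\,dt$; the reverse inequality follows from lower semicontinuity of the total variation along simple-function approximations of $f$.

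The hard part will be the last identification: that for almost every $t$ the reduced boundary $\partial^*\{f>t\}\cap\Omega$ agrees, modulo $\Hn$-null sets, with the level set $f^{-1}(t)\cap\Omega$. This is immediate for smooth $f$ via the implicit function theorem, but for a merely Sobolev $f$ it requires the fine structure theory: one must control the set of non-Lebesgue points of $f$ and show, through a Fubini-type slicing in $t$, that the approximate and topological level sets differ on an $\Hn$-negligible set for a.e.\ $t$. Equivalently, in the smooth-approximation route, the delicate point is passing to the limit in $\Hn(f_k^{-1}(t))$, where lower semicontinuity is easy but the matching upper bound is the quantitative heart of the argument.
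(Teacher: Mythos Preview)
The paper does not prove this theorem at all: it is stated as a background tool in the ``Other tools'' subsection, with a reference to the textbooks of Maggi and Ambrosio--Fusco--Pallara, and is later invoked only in the one-dimensional setting (the proof of \autoref{rearrenggrad}), where it amounts to a change of variables. So there is no proof in the paper to compare against.

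Your outline is a reasonable sketch of the standard textbook argument, and you correctly isolate the genuinely delicate point: for a merely Sobolev $f$ the symbol $f^{-1}(t)$ has no pointwise meaning, so the identity must be read through the precise representative, and the identification of $\partial^*\{f>t\}$ with the level set of that representative (up to $\Hn$-null sets, for a.e.\ $t$) is where the real work lies. One small gap worth flagging: in your reduction to indicators you assert that $\nu(E)=\int_\R \Hn(E\cap f^{-1}(t))\,dt$ is a Borel measure by ``Tonelli together with countable additivity,'' but the $t$-measurability of $t\mapsto \Hn(E\cap f^{-1}(t))$ is itself part of what the coarea theorem asserts and cannot be obtained from Tonelli without first knowing something about the joint structure; in the standard proofs this measurability is a byproduct of the approximation or slicing argument rather than an input to it.
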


Here we define the notion of decreasing rearrangement
\begin{defi}
	Let $\Omega\subset\R^n$ be an open set, and let $u: \Omega \to \R$ be a measurable function. We define the \emph{distribution function} $\eta_u : [0,+\infty[\, \to [0, +\infty[$ of $u$ as the function
	$$
	\eta_u(t)= \abs{\Set{x \in \Omega \, :\,  \abs{u(x)} > t}}
	$$
 \end{defi}
\begin{defi}\label{defi:incre}
	Let $u: \Omega \to \R$ be a measurable function. We define the \emph{increasing rearrangement} $u_*$ of $u$ as 
 \[
 u_*(s)=\inf\Set{t>0 | \eta(t)\leq\abs{\Omega}-s}.
 \] 

\end{defi}
\begin{oss}
Let $\Omega\subset\R^n$ be an open set, and let $u: \Omega \to \R$ be a measurable function. Then $u$ and its increasing rearrangement $u_*$ are equi-measurable namely
\[
\eta_u=\eta_{u_*},
\]
and, in addition, for every $p\in[1,+\infty)$,
$$
\displaystyle{\norma{u}_{L^p(\Omega)}=\norma{u_*}_{L^p(0, \abs{\Omega})}}.
$$
\end{oss}

Finally, here is an important property of extreme points convex sets.

\begin{defi}
Let $V$ be a vector space, let $C\subset V$ be a convex set, and let $z\in C$. We say that $z$ is an \emph{extreme point} of $C$ if it cannot be written as a convex combination of distinct elements of $C$. More precisely, if $z=(1-t)x+ty$, with $x,y\in C$ and $t\in(0,1)$, then $x=y=z$.
\end{defi}

\begin{prop}
\label{prop: triextr}
    Let $h\in\prof_1$. Then $h$ is an extreme point for $\prof_1$ if and only if there exists $x_0\in[0,1]$ such that $h=T_{x_0}$.
\end{prop}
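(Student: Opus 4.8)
The plan is to represent every $h\in\prof_1$ as a ``barycenter'' of the tent functions $T_{x_0}$ and thereby reduce the extreme-point question to whether the representing measure is a Dirac mass. First I would record that, being bounded and concave, $h$ extends continuously to $[0,1]$ with finite boundary values $a:=h(0)\geq 0$ and $b:=h(1)\geq 0$, and that its distributional second derivative is a non-positive measure, so $\mu:=-h''\geq 0$ on $(0,1)$. Using the Green's function $G(x,y)=\min(x,y)-xy$ of $-d^2/dx^2$ with Dirichlet conditions, I would establish the representation $h(x)=a(1-x)+bx+\int_{(0,1)}G(x,y)\,d\mu(y)$, the integral being finite because $h\in L^1(0,1)$ forces $\int_{(0,1)}y(1-y)\,d\mu(y)<\infty$. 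Since $1-x=\tfrac12 T_0$, $x=\tfrac12 T_1$, and $G(\cdot,y)=\tfrac{y(1-y)}{2}T_y$, this rewrites as $h=\int_{[0,1]}T_{x_0}\,d\nu(x_0)$, where $\nu=\tfrac a2\delta_0+\tfrac b2\delta_1+\tfrac{y(1-y)}{2}\,\mu$ is a measure on $[0,1]$ whose total mass is exactly $\int_0^1 h=1$. The key structural facts are that $\nu$ is \emph{uniquely} determined by $h$ (one recovers $a=\tfrac12 h(0)\cdot 2$, $b$, and $\mu$ from $h$), and that $\nu=\delta_{x_0}$ if and only if $h=T_{x_0}$.

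For the ``only if'' direction I would argue by contraposition. If $h\neq T_{x_0}$ for every $x_0\in[0,1]$, then $\nu$ is not a Dirac mass, so there is $c\in(0,1)$ with $\lambda:=\nu([0,c))\in(0,1)$. Writing $\nu=\lambda\nu_1+(1-\lambda)\nu_2$ with $\nu_1,\nu_2$ the normalized restrictions of $\nu$ to $[0,c)$ and $[c,1]$, the barycenters $f=\int T_{x_0}\,d\nu_1$ and $g=\int T_{x_0}\,d\nu_2$ again lie in $\prof_1$, since an average of nonnegative concave functions of unit integral is again nonnegative, concave, and of unit integral. Then $h=\lambda f+(1-\lambda)g$ is a nontrivial convex combination, and $f\neq g$ because $\nu_1\neq\nu_2$ (disjoint supports) together with the uniqueness/injectivity of the barycentric map $\nu\mapsto\int T_{x_0}\,d\nu$ forces distinct profiles. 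Hence $h$ is not an extreme point of $\prof_1$.

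For the ``if'' direction, suppose $h=T_{x_0}$ and $T_{x_0}=(1-t)f+tg$ with $f,g\in\prof_1$ and $t\in(0,1)$. Representing $f,g$ by their measures $\nu_f,\nu_g$, the same uniqueness gives $(1-t)\nu_f+t\nu_g=\delta_{x_0}$, and since $\delta_{x_0}$ is extreme among probability measures on $[0,1]$ we conclude $\nu_f=\nu_g=\delta_{x_0}$, i.e. $f=g=T_{x_0}$. An equivalent and more elementary verification avoids measures entirely: $f$ and $g$ must vanish wherever $T_{x_0}$ does; on each interval where $T_{x_0}$ is affine, a convex combination of concave functions that is affine forces both $f$ and $g$ to be affine there; the vanishing conditions at the endpoints, the single kink location, and the normalization $\int_0^1 f=\int_0^1 g=1$ then pin down $f=g=T_{x_0}$.

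I expect the main obstacle to be making the representation step fully rigorous rather than the combinatorics of the splitting. One must identify $-h''$ with a genuine nonnegative Radon measure of finite $y(1-y)$-weighted mass and justify the Green's-function formula for merely concave $L^\infty$ profiles, including degenerate cases such as $h'(0^+)=+\infty$ or atoms of $\mu$ at interior points; the injectivity of $\nu\mapsto\int T_{x_0}\,d\nu$, which is what guarantees $f\neq g$, then follows transparently from the recovery of $a$, $b$, and $\mu$ from $h$. Once the representation and its uniqueness are in place, the Dirac-mass dichotomy and the convexity bookkeeping are routine.
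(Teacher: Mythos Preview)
Your proof is correct and takes a genuinely different, more structural route than the paper. You build a Choquet-type integral representation $h=\int_{[0,1]}T_{x_0}\,d\nu(x_0)$ with $\nu$ a probability measure uniquely determined by $h$, and then reduce the question to the trivial fact that the extreme points of the probability simplex are the Dirac masses. The paper instead argues the ``if'' direction via the elementary observation that $\max_{[0,1]} h\le 2$ with equality only for triangles, and the ``only if'' direction by an ad hoc case split: if $h$ is positive at an endpoint it peels off a small multiple of $T_0$ or $T_1$, while if $h(0)=h(1)=0$ it splits the measure $-h''$ at an interior point and solves two Dirichlet problems. Your approach is cleaner and handles all cases uniformly, and it makes transparent why $\prof_1$ is a simplex with vertices $\{T_{x_0}\}$; the price is the need to justify the Green's-function representation and the injectivity of $\nu\mapsto\int T_{x_0}\,d\nu$, which you correctly flag as the only nontrivial step. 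The paper's argument is more elementary in that it avoids any integral-representation machinery, but it is essentially doing the same splitting of $-h''$ in the zero-endpoint case, just without the unifying language. Your alternative ``elementary'' verification of the ``if'' direction (affine pieces force $f,g$ affine, plus endpoint and mass constraints) is also valid and is close in spirit to the paper's max-equals-two argument.
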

\begin{proof}
    Let us start by proving that for every $x_0\in[0,1]$ the triangle $T_{x_0}$ is an extreme point of $\prof_1$. Let $h\in\prof_1$ and let $x_M$ be a maximum point for $h$, then the concavity of $h$ ensures \[h\ge \dfrac{h(x_M)}{2}T_{x_M}.\]
    Recalling that $\int_0^1 h\,dx=1$, we get that 
    \begin{equation}
    \label{eq: h<2}
    h(x_M)=\max_{[0,1]}h\le 2,
    \end{equation}
    and the equality holds if and only if $h=T_{x_M}$. 
    
    Let now $x_0\in[0,1]$, and assume that
    \[
    T_{x_0}(x)=(1-t)h_0(x)+th_1(x) \qquad x\in[0,1],
    \]
    with $h_0,h_1\in\prof_1$, and $t\in(0,1)$. Since 
    \[
    2=T_{x_0}(x_0)\le\max\{h_0(x_0),h_1(x_0)\},
    \]
    and
    \[
    2=(1-t)h_0(x_0)+th_1(x_0),
    \]
    we get equality in \eqref{eq: h<2} for both $h_0$ and $h_1$. Therefore,  $h_0=h_1=T_{x_0}$, and we have proved that $T_{x_0}$ is an extreme point of $\prof_1$.\medskip

    We now prove that the triangles are the only extreme points of $\prof_1$. Let $h\in\prof_1$ be such that $h\ne T_{x_0}$ for every $x_0\in [0,1]$.
    
    We begin by assuming that $h(1)>0$. Notice that, in this setting, there exists $s\in(0,1)$, such that the function 
    \[
    h_s=\frac{h-sT_1}{1-s}\in\prof_1.
    \]
    In particular, we get 
    \[
    h=(1-s)h_s+sT_1,
    \] 
    that is, $h$ is not an extreme point of $\prof$. An analogous computation can be done when $h(0)>0$. 
    
    Assume now that $h(0)=h(1)=0$ and let $\nu$ be the positive Radon measure representing $-h''$. Since $h\ne T_{x_0}$ for every $x_0$, then there exists $y_0\in(0,1)$ such that $\nu([0,y_0])>0$ and $\nu((y_0,1])>0$. Let 
    \begin{align*}&\nu_1=\nu|_{[0,x_0]}, &\nu_2=\nu|_{(x_0,1]},\end{align*}
    and let $h_1,h_2$ be the solutions to
\begin{align*}&\begin{cases}
-h_1''=\nu_1, \\
h_1(0)=h_1(1)=0,
\end{cases} 
&\begin{cases}
-h_2''=\nu_2, \\
h_2(0)=h_2(1)=0.
\end{cases} 
\end{align*}
  We have that $h_1,h_2\in\prof$ and $h=h_1+h_2$, so that, letting
\[\tilde{h}_i=\dfrac{h_i}{\int_0^1 h_i\,dx},\qquad i=1,2,\]
we get $\tilde{h}_1,\tilde{h_2}\in\prof_1$, and
\[
h=t \tilde{h}_1+(1-t)\tilde{h}_2,
\]
with $t\in(0,1)$. Hence, $h$ is not an extreme point of $\prof_1$.
\end{proof}

Finally, we recall the definition of a quasiconcave function.
\begin{defi}
\label{def:quasiconvexs}
    A function $f:\R \to \R$ is quasiconcave if for all $x, y \in \R$ and $\lambda \in [0,1]$ we have

$$f(\lambda x + (1 - \lambda)y)\geq\min\big\{f(x),f(y)\big\}.$$

A function defined on an interval is quasiconcave if and only if it is monotone or 'increasing then decreasing', i.e. if there are two complementary intervals (one of which may be empty) such that it is increasing on the former and decreasing on the latter.
\end{defi}

\section{Minimization of the Steklov eigenvalue}
\label{sec: stek1}
For every $h\in\prof_1$ we consider the Sturm-Liouville eigenvalue $\sigma_1(h)$  defined in \eqref{eigensigma}

Lemmas \autoref{lem: compact} and \autoref{lem: continuity} prove that the problems
\[\begin{split}\max&\Set{\sigma_1(h)\colon h\in\prof_1}\\[15 pt]
\min&\Set{\sigma_1(h)\colon h\in\prof_1}
\end{split}\]
admit solutions. In particular, the solution to the maximization problem (see for instance \cite{T65}) is given by the parabola $p(x)=6x(1-x)$, with corresponding eigenvalue $\sigma_1(p)=12$. In this section, we aim to prove \autoref{teor: minstek}, namely that the problem
\[\min\Set{\sigma_1(h)\colon h\in\prof_1},\]
admits as unique solutions the functions $T_0(x)=2(1-x)$ and $T_1(x)=2x$ with corresponding eigenvalue 
\[
\sigma_1(T_0)=\sigma_1(T_1)=(j'_{0,1})^2/2,
\]
where $j'_{0,1}$ is the first positive zero of the first derivative of the Bessel function $J_0$.

\begin{oss}
The function 
\[h\in\prof\longmapsto \sigma_1(h),\]
satisfies the following properties:
\begin{itemize}
    \item \textbf{monotonicity}: for every $h_0,h_1\in\prof$, if $h_0\le h_1$ then 
    \[
    \sigma_1(h_0)\le\sigma_1(h_1);
    \]
    \item \textbf{homogeneity}: for every $h\in\prof$ and for every $\alpha>0$, 
    \[
    \sigma_1(\alpha h)=\alpha\sigma_1(h);
    \]
    \item \textbf{concavity}: for every $h_0,h_1\in\prof$ and for every $t\in[0,1]$, letting $h_t=(1-t)h_0+th_1$, we have that 
    \[
    \sigma_1(h_t)\ge (1-t)\sigma_1(h_0)+t\,\sigma_1(h_1);
    \]
    \item \textbf{symmetry}: let $h\in\prof$, and let $k(x)=h(1-x)$, then
    \begin{equation}
    \label{eq: symmetry}
    \sigma_1(k)=\sigma_1(h).
    \end{equation}
\end{itemize}
\end{oss}

\begin{prop}
\label{prop: minextr}
Let $h\in\prof_1$ be a solution to problem \eqref{eq:minstek}, then $h$ is an extreme point of $\prof_1$.
\end{prop}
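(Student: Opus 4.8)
The plan is to argue by contradiction, sharpening the concavity of $\sigma_1$ recorded in the remark above into \emph{strict} concavity along every nondegenerate segment of $\prof_1$. Note that the general principle ``a concave function is minimized at an extreme point'' only guarantees the existence of one extreme minimizer, whereas the statement asserts that \emph{every} solution is extreme; this is exactly what the strict-concavity refinement will deliver. So suppose $h$ solves \eqref{eq:minstek} but is not an extreme point, i.e.\ $h=(1-t)h_0+th_1$ with $h_0,h_1\in\prof_1$, $h_0\neq h_1$, and $t\in(0,1)$. Since $\sigma_1$ is concave and $h$ is a minimizer, $\sigma_1(h_0),\sigma_1(h_1)\ge\sigma_1(h)$ gives
\[
\sigma_1(h)\ge(1-t)\sigma_1(h_0)+t\,\sigma_1(h_1)\ge\sigma_1(h),
\]
so all inequalities are equalities; in particular $\sigma_1(h_0)=\sigma_1(h_1)=\sigma_1(h)=:\sigma$.

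Next I would extract a common first eigenfunction. Writing the Rayleigh quotient in \eqref{eigensigma} as $R_w(k)=\int_0^1(w')^2k\,dx\big/\int_0^1 w^2\,dx$, which is \emph{linear} in the weight $k$, let $v$ be a first eigenfunction for the weight $h$ (normalized by $\int_0^1 v\,dx=0$). Then
\[
\sigma=R_v(h)=(1-t)R_v(h_0)+t\,R_v(h_1),\qquad R_v(h_0)\ge\sigma,\quad R_v(h_1)\ge\sigma,
\]
and the equality forces $R_v(h_0)=R_v(h_1)=\sigma$. Hence $v$ also minimizes the Rayleigh quotients of $h_0$ and $h_1$, so it satisfies the three Euler--Lagrange equations $-(h v')'=\sigma v$, $-(h_0 v')'=\sigma v$, and $-(h_1 v')'=\sigma v$ (the multiplier of the constraint $\int v=0$ vanishes after integrating and using the natural boundary conditions). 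Subtracting the equations for $h_0$ and $h_1$ yields $\big((h_1-h_0)v'\big)'=0$, so $(h_1-h_0)v'$ is constant; evaluating the fluxes $h_iv'$ at $x=0$, where the boundary conditions force them to vanish, shows this constant is $0$, giving $(h_0-h_1)\,v'\equiv0$ on $(0,1)$.

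It then remains to conclude $h_0=h_1$, which reduces to showing $v'\neq0$ almost everywhere on the support of $h$. Since $v$ is a first eigenfunction with $\int_0^1 v=0$ it changes sign exactly once, at some interior $z$, and the flux $p=hv'$ satisfies $p'=-\sigma v$ with $p$ vanishing at the endpoints of the support $[\alpha,\beta]$ of $h$; monotonicity of $p$ on $(\alpha,z)$ and $(z,\beta)$ then forces $p<0$, hence $v'<0$, throughout the interior of the support. Consequently $h_0=h_1$ on $\{h>0\}$, while on $\{h=0\}$ the identity $h=(1-t)h_0+th_1$ with $h_0,h_1\ge0$ and $t\in(0,1)$ already forces $h_0=h_1=0$. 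This gives $h_0=h_1$, the desired contradiction.

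The main obstacle is precisely this last step: the almost-everywhere nonvanishing of $v'$ on the support of $h$. This rests on the nodal structure of the first Sturm--Liouville eigenfunction and on handling the points where the weight $h$ degenerates to $0$; there one must use that a concave $h$ is locally Lipschitz on the interior of its support to justify the flux/monotonicity computation (or, alternatively, ODE uniqueness to rule out $v'\equiv v\equiv0$ on a set of positive measure). Everything else is a routine combination of the concavity property and the variational characterization \eqref{eigensigma}.
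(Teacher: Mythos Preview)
Your argument is correct and follows essentially the same route as the paper: assume $h$ is a non-extreme minimizer, use linearity of the numerator in the weight together with minimality to show that a first eigenfunction $v$ for $h$ is also a first eigenfunction for $h_0$ and $h_1$, subtract the resulting equations to obtain $(h_0-h_1)v'\equiv 0$, and conclude $h_0=h_1$. The paper does exactly this (subtracting $h$ and $h_0$ rather than $h_0$ and $h_1$, and phrasing the subtraction via the weak form with test functions $\varphi'=\psi\in L^2$), but it simply asserts ``which yields $h=h_0$'' at the end; your version is more explicit about why $v'\neq 0$ a.e. One small simplification: your worry about the support of $h$ is unnecessary, since any $h\in\prof_1$ is strictly positive on $(0,1)$ (a nonnegative concave function vanishing at an interior point is identically zero), so the flux argument runs on the whole interval.
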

\begin{proof}
    Let $h\in\prof_1$ be a solution to problem \eqref{eq:minstek}. By contradiction assume that $h$ is not an extreme point of $\prof_1$. Let $h_0,h_1\in\prof_1\setminus\set{h}$ and $t\in(0,1)$ such that 
    \[
    h=(1-t)h_0+th_1.
    \]
    Let $v\in H^1(0,1)$ be an eigenfunction for $\sigma_1(h)$ with 
    \[\int_0^1 v^2\,dx=1,\] then
    \[\begin{split}\sigma_1(h)&=\int_0^1 (v')^2 h\,dx=(1-t)\int_0^1 (v')^2 h_0\,dx+t\int_0^1 (v')^2 h_1\,dx\\[10 pt] &\ge(1-t)\sigma_1(h_0)+t\sigma_1(h_1).
    \end{split}\]
    On the other hand, by the minimality of $\sigma_1(h)$, we have 
\begin{align*}
    &\sigma_1(h_0)=\int_0^1 (v')^2 h_0\,dx, &\sigma_1(h_1)=\int_0^1 (v')^2 h_1\,dx.
\end{align*}
Therefore, $v$ is also an eigenfunction for $\sigma_1(h_0)$ and $\sigma_1(h_1)$. Let us now prove that $h_0=h$, thus reaching a contradiction. From the weak form of equation \eqref{eq:Steklov}, we have that for every $\varphi\in H^1(0,1)$ 
\[\begin{split}\int_0^1 v'\varphi' h\,dx&=\sigma_1(h)\int_0^1 v\varphi\,dx\\[10 pt]
     &=\sigma_1(h_0)\int_0^1 v\varphi\,dx=\int_0^1 v'\varphi' h_0\,dx,
\end{split}\]
that is
\[\int_0^1 (h-h_0)v'\varphi'\,dx=0\]
for every $\varphi\in H^1(0,1)$, which yields $h=h_0$, since, 
for every $\psi\in L^2(0,1)$, we can choose 
\[\varphi(x)=\int_0^x \psi(t)\,dt.\] 

Notice that we used that $v'$ has a constant sign. Indeed, it is well-known that the first eigenfunction of the Sturm-Liouville problem has exactly two nodal domains (see for instance \cite[Section VI.6]{courant_Hilbert}), hence, from equation \eqref{eq:Steklov} and the fact that the eigenfunction has zero mean, the claim follows.
\end{proof}
In order to study the minimum problem \eqref{eq:minstek}, we need to evaluate $\sigma_1$ on triangles, and we will need the following result, whose proof can be found in \cite{HM22}. 

\begin{lemma}
\label{lem: defsigma}
    Let $x_0\in [0,1]$. Then $\sigma_1(T_{x_0})$ is the first non-zero root $\sigma$ of the equation
    \begin{equation}\label{eq:sigmaT}J_0\left(\sqrt{2\sigma}x_0\right)J_0'\left(\sqrt{2\sigma}(1-x_0)\right)+J_0\left(\sqrt{2\sigma}(1-x_0)\right)J_0'\left(\sqrt{2\sigma}x_0\right)=0.\end{equation}
\end{lemma}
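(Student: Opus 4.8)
The plan is to solve the Sturm--Liouville problem \eqref{eq:Steklov} explicitly on each of the two linear pieces of $T_{x_0}$ and then glue the pieces together at $x_0$. I would first treat the generic case $x_0\in(0,1)$, in which $T_{x_0}$ consists of the increasing branch $h(x)=2x/x_0$ on $[0,x_0]$ and the decreasing branch $h(x)=2(1-x)/(1-x_0)$ on $[x_0,1]$. On the left branch the equation $-(hv')'=\sigma v$ reduces, after dividing by the slope, to $x v''+v'+\tfrac{\sigma x_0}{2}\,v=0$, which is a Bessel equation of order zero under the change of variable $t=\sqrt{2\sigma x_0\,x}$; its two independent solutions are $J_0\!\left(\sqrt{2\sigma x_0\,x}\right)$ and $Y_0\!\left(\sqrt{2\sigma x_0\,x}\right)$. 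An identical computation on the right branch, after the reflection $s=1-x$, produces the solutions $J_0\!\left(\sqrt{2\sigma(1-x_0)(1-x)}\right)$ and the corresponding $Y_0$.

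Next I would discard the singular solutions. Because the weight $h$ vanishes at the vertices $x=0$ and $x=1$, the admissibility requirement $v\in H^1(0,1)$ coming from \eqref{eigensigma} forces me to keep only the regular Bessel function: near the origin one has $Y_0\!\left(\sqrt{2\sigma x_0\,x}\right)\sim \mathrm{const}\cdot\log x$, whose derivative behaves like $1/x\notin L^2$, so the $Y_0$ branch lies outside $H^1$. Hence the eigenfunction must take the form $v(x)=A\,J_0\!\left(\sqrt{2\sigma x_0\,x}\right)$ on $[0,x_0]$ and $v(x)=C\,J_0\!\left(\sqrt{2\sigma(1-x_0)(1-x)}\right)$ on $[x_0,1]$, and I would record that the natural boundary conditions $h(0)v'(0)=h(1)v'(1)=0$ are then satisfied automatically.

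I would then impose the transmission conditions at $x=x_0$. Evaluating the two representations there gives the arguments $\sqrt{2\sigma}\,x_0$ and $\sqrt{2\sigma}\,(1-x_0)$. Continuity of $v$ yields one linear relation between $A$ and $C$, while continuity of the flux $h v'$ (here $h$ is continuous with $h(x_0)=2$, so this is continuity of $v'$) yields a second; the chain rule produces the common factor $\pm\sqrt{2\sigma}/2$ in front of $J_0'$, with opposite signs on the two sides owing to the reflection $s=1-x$. Requiring a nontrivial pair $(A,C)$ forces the $2\times 2$ determinant of the resulting homogeneous system to vanish, which is precisely equation \eqref{eq:sigmaT}.

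Finally I would identify the correct root. The value $\sigma=0$ solves \eqref{eq:sigmaT} and corresponds to the constant eigenfunction $\sigma_0=0$; by standard Sturm--Liouville theory the eigenvalues are simple, increasing, and coincide exactly with the non-negative roots of this characteristic equation, the $n$-th positive root carrying an eigenfunction with $n$ interior sign changes. Therefore $\sigma_1(T_{x_0})$, the first nonzero eigenvalue, is the first positive root, as claimed. The degenerate cases $x_0\in\{0,1\}$ I would dispatch directly: there $T_{x_0}$ is a single linear piece, and using $J_0(0)=1$, $J_0'(0)=0$ the equation collapses to $J_0'\!\left(\sqrt{2\sigma}\right)=0$, giving $\sigma_1=(j'_{0,1})^2/2$. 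I expect the main obstacle to be the rigorous exclusion of the $Y_0$ solution at the degenerate endpoints where the weight vanishes, together with the clean argument that the \emph{first} positive root of the transcendental equation is genuinely the principal eigenvalue rather than a higher one; both are settled by the $H^1$-admissibility in \eqref{eigensigma} and by Sturm--Liouville oscillation theory.
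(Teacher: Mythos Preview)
The paper does not actually prove this lemma; it simply refers the reader to \cite{HM1}. Your derivation is correct and is the standard one: reduce $-(hv')'=\sigma v$ on each linear branch of $T_{x_0}$ to Bessel's equation of order zero via the substitutions $t=\sqrt{2\sigma x_0\,x}$ and $t=\sqrt{2\sigma(1-x_0)(1-x)}$, discard the $Y_0$ solutions because their derivatives blow up like $1/x$ and hence fall outside $H^1(0,1)$, and impose continuity of $v$ and of the flux $hv'$ at $x_0$ to obtain the $2\times2$ determinant condition \eqref{eq:sigmaT}. One small sharpening: rather than arguing that $h$ is continuous at $x_0$ so that flux continuity reduces to continuity of $v'$, you can note directly from the weak equation that $(hv')'=-\sigma v\in L^2(0,1)$, whence $hv'\in H^1(0,1)\subset C[0,1]$; this also makes the boundary conditions $h(0)v'(0)=h(1)v'(1)=0$ automatic. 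Your identification of $\sigma_1$ with the first positive root via Sturm--Liouville oscillation theory is the right way to close the argument.
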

In addition, here we summarize the properties of the Bessel function $J_0$ which we will use.
\begin{prop}
\label{prop: bessel}
    Let $J_0$ be the Bessel function of the first kind of order 0, and let $j_{0,1}$ and $j_{0,1}'$ be the first zero of $J_0$ and $J_0'$ respectively. Then
    \[
        0<j_{0,1}<j_{0,1}',
    \]
    and
    \begin{alignat*}{2}
        &J_0(x) \ge 0 \qquad&&\forall \, x\in(0,j_{0,1}),\\[3 pt]
        &J_0'(x) \le 0 &&\forall \, x\in(0,j_{0,1}'),\\[3 pt]
        &J_0(x) < 0 &&\forall \, x\in(j_{0,1},j_{0,1}').
    \end{alignat*}
\end{prop}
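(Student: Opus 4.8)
The plan is to reduce everything to two elementary facts about $J_0$: the normalization $J_0(0)=1$ together with $J_0'(0)=0$, and the fact that $J_0$ solves Bessel's equation of order zero, which in self-adjoint form reads
\[
(x J_0'(x))' = -x J_0(x).
\]
The existence of a first positive zero for each of $J_0$ and $J_0'$ I would take as classical (oscillation theory), so that $j_{0,1}$ and $j_{0,1}'$ are well defined and positive. The whole argument then runs through the auxiliary function $g(x):=x J_0'(x)$, for which the equation gives $g'(x)=-x J_0(x)$ and $g(0)=0$; differentiating $g$ rather than $J_0'$ directly is what removes the indeterminacy caused by $J_0'(0)=0$.

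First I would pin down the sign of $J_0$ below $j_{0,1}$. Since $j_{0,1}$ is by definition the smallest positive zero of $J_0$ and $J_0(0)=1>0$, the intermediate value theorem forces $J_0>0$ on $[0,j_{0,1})$; in particular the inequality $J_0\ge 0$ on $(0,j_{0,1})$ holds. With $J_0>0$ on $(0,j_{0,1})$ I then get $g'(x)=-x J_0(x)<0$ there, so $g$ is strictly decreasing on $(0,j_{0,1})$; since $g(0)=0$ this yields $g(x)<0$, i.e. $J_0'(x)<0$, for every $x\in(0,j_{0,1})$, and by strict monotonicity of $g$ up to the endpoint also $g(j_{0,1})<0$, hence $J_0'(j_{0,1})<0$. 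Consequently $J_0'$ does not vanish on $(0,j_{0,1}]$; combined with $J_0'(0)=0$ this gives $j_{0,1}'>j_{0,1}$, establishing the chain $0<j_{0,1}<j_{0,1}'$.

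Next I would extend the sign of $J_0'$ to the full interval $(0,j_{0,1}')$. By definition $J_0'$ has no zero there, and I have just shown $J_0'<0$ at points near $j_{0,1}$ (which lies inside $(0,j_{0,1}')$); since $J_0'$ is continuous and never zero on the connected set $(0,j_{0,1}')$, it cannot change sign, so $J_0'<0$ throughout $(0,j_{0,1}')$, which is the third inequality. Finally, on $(j_{0,1},j_{0,1}')$ the strict negativity of $J_0'$ makes $J_0$ strictly decreasing, and since $J_0(j_{0,1})=0$ this forces $J_0(x)<0$ for every $x\in(j_{0,1},j_{0,1}')$, the last inequality.

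The argument is essentially bookkeeping; the only genuinely delicate points are pinning down the sign of $J_0'$ immediately to the right of the origin, handled by working with $g=xJ_0'$, and upgrading $j_{0,1}'\ge j_{0,1}$ to a strict inequality, for which the key observation is $J_0'(j_{0,1})<0$. If one preferred to avoid invoking oscillation theory for the existence of the zeros, one could instead start from the power series $J_0(x)=\sum_{m\ge 0}(-1)^m (x/2)^{2m}/(m!)^2$ to read off $J_0(0)=1$, $J_0'(0)=0$, and $J_0'(x)<0$ for small $x>0$ directly, but this does not shorten the sign analysis above.
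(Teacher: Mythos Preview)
The paper does not prove this proposition at all: it is introduced with the phrase ``here we summarize the properties of the Bessel function $J_0$ which we will use'' and left as background, with the subsequent proof environment jumping straight to \autoref{teor: minstek}. So there is no approach to compare against.

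Your argument is correct. Working with $g(x)=xJ_0'(x)$ and the self-adjoint form $(xJ_0')'=-xJ_0$ is exactly the right device: it gives $g(0)=0$ and $g'<0$ on $(0,j_{0,1})$, hence $J_0'<0$ there and in particular $J_0'(j_{0,1})<0$, which is what forces the strict inequality $j_{0,1}<j_{0,1}'$. The remaining sign claims then follow by the connectedness argument you give. One cosmetic slip: when you write ``the third inequality'' you mean the second displayed one, $J_0'\le 0$ on $(0,j_{0,1}')$. Your caveat about taking the existence of the first positive zeros as classical is appropriate; if you wanted to close that gap too, the Sturm comparison of $(xJ_0')'+xJ_0=0$ with, say, $u''+u=0$ on intervals bounded away from the origin would do it, but for the purposes of this paper the proposition is clearly intended as a quotation of standard facts.
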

\begin{proof}[Proof of \autoref{teor: minstek}]
By \autoref{lem: compact} and \autoref{lem: continuity} we have that the minimum problem \eqref{eq:minstek} admits a solution. On the other hand, by \autoref{prop: minextr} and \autoref{prop: triextr} we have that a solution to \eqref{eq:minstek} has to be a triangle $T_{x_0}$ for some $x_0\in[0,1]$. By the symmetry of $\sigma_1$ stated in \eqref{eq: symmetry}, we notice that to prove the theorem it is sufficient to show that the function
\[x_0\in\left[0,\frac{1}{2}\right]\mapsto \sigma_1(T_{x_0}),\] 
attains its minimum for $x_0=0$.

Let $j_{0,1}$ and $j'_{0,1}$ be the first positive roots of $J_0$ and $J'_0$ respectively. For every $x\in[0,1/2]$, and $s\in[0,+\infty)$, let
\[
F(x,s)=J_0(sx)J_0'(s(1-x))+J_0(s(1-x))J_0'(sx),
\]
which is the function defined in \autoref{lem: defsigma} that determines the value $\sigma_1(T_{x_0})$. Let $x_0\in(0,1/2)$ and let $s(x_0)$ be the smallest non-zero root of the equation
\begin{equation}\label{eq:ausiliaria}F(x_0,s)=0.\end{equation}
We claim that 
\begin{equation}
\label{eq: sx0inIx0}
s(x_0)\in I_{x_0}=\left(\dfrac{j_{0,1}}{(1-x_0)},\min\Set{\dfrac{j_{0,1}}{x_0},\dfrac{j'_{0,1}}{1-x_0}}\right).
\end{equation}
Indeed, since $J_0$ and $-J_0'$ are positive in $(0,j_{0,1})$,  and $x_0<1-x_0$, then
\[
F(x_0,s)<0 \qquad \forall \, s\in\left(0,\frac{j_{0,1}}{1-x_0}\right].
\]
On the other hand, using again the properties in \autoref{prop: bessel}, a direct computation gives
\[
F\left(x_0,\min\Set{\dfrac{j_{0,1}}{x_0},\dfrac{j'_{0,1}}{1-x_0}}\right)>0,
\] 
thus proving the claim. Notice that \eqref{eq: sx0inIx0} gives
\begin{equation}
\label{eq: sign}
\begin{aligned}
&J_0(s(x_0)x_0)>0, \qquad &J_0(s(x_0)(1-x_0))<0, \\[3 pt]
&J_0'(s(x_0)x_0)<0, \qquad &J_0'(s(x_0)(1-x_0))<0. 
\end{aligned}
\end{equation}
Since $J_0$ solves the equation
\begin{equation}
\label{eq: besselODE}
J_0''(t)+\frac{J_0'(t)}{t}+J_0(t)=0,
\end{equation}
then we have
\[\begin{split}
\partial_s F(x_0,s)=&J'_0(sx_0)J_0'(s(1-x_0))-J_0(sx_0)J_0(s(1-x_0))\\[8 pt]&-\dfrac{1}{s}\left(J_0(sx_0)J'_0(s(1-x_0))+J_0(s(1-x_0))J'_0(sx_0)\right).
\end{split}\]
In particular, \eqref{eq:ausiliaria} and \eqref{eq: sign} ensure that 
\begin{equation}
\label{eq: desposi}
\partial_s F(x_0,s(x_0))>0. 
\end{equation}
By the implicit function theorem, the function $x_0\mapsto s(x_0)$ is continuous, differentiable and
\[s'(x_0)\partial_s F(x_0,s(x_0))+\partial_{x} F(x_0,s(x_0))=0.\]
Using \eqref{eq: besselODE}, direct computations give
\[\partial_{x} F(x_0,s(x_0))=-\dfrac{J_0(s(x_0)(1-x_0))J'_0(s(x_0)x_0)}{x_0}+\dfrac{J(s(x_0)x_0)J'_0(s(x_0)(1-x_0))}{1-x_0}.\]
As before, \eqref{eq: sign} ensure that 
\begin{equation}
\label{eq: dexnega}
\partial_x F(x_0,s(x_0))<0.
\end{equation}
Joining \eqref{eq: desposi} and \eqref{eq: dexnega}, we have that $s'(x_0)>0$ and $x_0\mapsto s(x_0)$ is increasing. Finally, 
\[
\sigma_1(T_{x_0})=s^2(x_0)/2
\]
is increasing for $x_0\in(0,1/2)$, and the minimum is achieved when $x_0=0$. 
\end{proof}

\begin{oss}
    Equation \eqref{eq:sigmaT} for $x_0=0$ reduces to
\[J'_0\left(\sqrt{2\sigma}\right)=0\]
that is, $\sigma_1(2x)=\sigma_1(T_0)=(j'_{0,1})^2/2$.
\end{oss}

\section{An alternative proof for the minimum of \texorpdfstring{$\sigma(h)$}{s(h)}}
\label{sec: stek2}
In this section, we minimize $\sigma_1(h)$  using an alternative approach that avoids the explicit computation of the eigenvalue. In particular, our aim is to define a particular kind of symmetrization that allows us to prove that solutions to \eqref{eq:minstek} have to be monotone. Before defining the aforementioned symmetrization we prove an equivalent formulation for the eigenvalue $\sigma_1(h)$, referring to the ideas for the proof in \cite[Lemma 4.2]{OK90}

\begin{prop}
\label{prop: sigma=hardy}
    Let $h\in\prof_1$, then 
\[
    \sigma_1(h)=\min\Set{\dfrac{\displaystyle\int_0^1  (\varphi')^2 \,dx}{\displaystyle\int_0^1 \frac{\varphi^2}{h}\,dx}| \begin{aligned}&\:\,\varphi\in H^1_0(0,1), \\[3 pt] \displaystyle &\int_0^1 \frac{\varphi^2}{h}\,dx<\infty
\end{aligned}}.
\]
\end{prop}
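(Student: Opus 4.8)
The plan is to prove two inequalities between $\sigma_1(h)$ and the Hardy-type quantity on the right-hand side, which I will denote by $\Lambda(h)$; each inequality is obtained by transforming a competitor for one problem into a competitor for the other through the formal substitution $\varphi = h v'$, i.e. $v' = \varphi/h$. The key algebraic identity behind the equivalence is that, whenever $v\in H^1(0,1)$ satisfies $v'=\varphi/h$, one has $\int_0^1 h (v')^2\,dx = \int_0^1 \varphi^2/h\,dx$, so the Steklov numerator in \eqref{eigensigma} coincides with the Hardy denominator, while a single integration by parts will exchange the remaining two integrals.

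First I would establish $\sigma_1(h)\le\Lambda(h)$. Given an admissible $\varphi\in H^1_0(0,1)$, I define $v$ to be the primitive of $\varphi/h$ normalised to mean zero, so that $v'=\varphi/h$ and $\int_0^1 v\,dx=0$, which makes $v$ an admissible competitor in \eqref{eigensigma}. The crucial point, and what I expect to be the main technical obstacle, is to check that $v$ really belongs to $H^1(0,1)$, i.e. that $\varphi/h\in L^2(0,1)$: this is \emph{not} guaranteed by the sole assumption $\int_0^1\varphi^2/h\,dx<\infty$. Here the concavity of $h$ is decisive: since $h$ is concave and nonnegative it vanishes at most linearly at the endpoints, whence $1/h(x)\lesssim 1/x$ near $0$ and $1/h(x)\lesssim 1/(1-x)$ near $1$, and the classical one-dimensional Hardy inequality (see \cite{hardy}, \cite{opickuf}) applied to $\varphi\in H^1_0(0,1)$ gives $\int_0^1 \varphi^2/h^2\,dx \lesssim \int_0^1(\varphi')^2\,dx<\infty$. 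With $v\in H^1$ in hand, I use $\int_0^1 h(v')^2\,dx=\int_0^1\varphi^2/h\,dx$ and, integrating by parts with $\varphi(0)=\varphi(1)=0$,
\[
\int_0^1 \frac{\varphi^2}{h}\,dx=\int_0^1 \varphi\, v'\,dx=-\int_0^1 \varphi' v\,dx\le\left(\int_0^1(\varphi')^2\,dx\right)^{1/2}\left(\int_0^1 v^2\,dx\right)^{1/2}.
\]
Squaring this Cauchy--Schwarz estimate yields $\int_0^1 v^2\,dx\ge \big(\int_0^1\varphi^2/h\,dx\big)^2\big/\int_0^1(\varphi')^2\,dx$, so the Steklov quotient of $v$ is bounded above by the Hardy quotient of $\varphi$; taking the infimum over $\varphi$ gives $\sigma_1(h)\le\Lambda(h)$.

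For the reverse inequality $\Lambda(h)\le\sigma_1(h)$ I would avoid any regularity issue by testing with the Steklov eigenfunction itself. Let $v$ be a first eigenfunction of \eqref{eq:Steklov}, normalised by $\int_0^1 v^2\,dx=1$, and set $\varphi=hv'$. The eigenvalue equation gives $\varphi'=(hv')'=-\sigma_1(h)v\in L^2$, while the boundary conditions $h(0)v'(0)=h(1)v'(1)=0$ give $\varphi\in H^1_0(0,1)$; moreover $\int_0^1\varphi^2/h\,dx=\int_0^1 h(v')^2\,dx=\sigma_1(h)$ is finite, so $\varphi$ is admissible. A direct computation then gives $\int_0^1(\varphi')^2\,dx=\sigma_1(h)^2$, whence the Hardy quotient of $\varphi$ equals $\sigma_1(h)^2/\sigma_1(h)=\sigma_1(h)$, so $\Lambda(h)\le\sigma_1(h)$. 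Combining the two inequalities proves the equality, and since this last step exhibits a competitor $\varphi=hv'$ attaining the value $\sigma_1(h)$, the infimum defining $\Lambda(h)$ is in fact a minimum, as claimed.
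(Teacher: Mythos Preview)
Your proof is correct. The direction $\Lambda(h)\le\sigma_1(h)$ is argued exactly as in the paper: set $\varphi=w=hv'$ with $v$ a first Steklov eigenfunction and compute directly. For the converse inequality the two arguments diverge. The paper does not build a Steklov competitor out of a generic $\varphi$; instead it keeps the eigenfunction $w=hv'$ and proves the Hardy bound $\int_0^1(\varphi')^2\,dx\ge\sigma_1(h)\int_0^1\varphi^2/h\,dx$ for every admissible $\varphi$ by splitting $(0,1)$ at the unique critical point $x_0$ of $w$ (equivalently, the unique zero of $v$), applying Cauchy--Schwarz with weight $w'$ on each half, and then invoking the ODE $-w''=\sigma_1(h)\,w/h$ to close the estimate. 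Your duality route is cleaner in that it avoids any analysis of the nodal or critical structure of the eigenfunction, but it has to import the classical Hardy inequality as an external tool (to guarantee $\varphi/h\in L^2$, using that concavity forces $h$ to vanish at most linearly at the endpoints). The paper's approach is self-contained in this respect and, being the standard ground-state substitution technique for weighted Hardy constants, also makes transparent why the minimiser of $\Lambda(h)$ is precisely $hv'$.
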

\begin{proof}

    Let $v\in H^1(0,1)$ be a weak solution to \eqref{eq:Steklov} and let $w= h v'$. Then, since  $w'=-\sigma_1(h)v$, we have that $w\in H^2(0,1)\cap H^1_0(0,1)$ and that is a solution to
\begin{equation}
\label{cange_var eq}
\begin{cases}
    \displaystyle{-w''(x)= \frac{\sigma_1(h)}{h(x)}w(x) }\qquad\qquad x \in (0,1)\\[7 pt]
    w(0)=w(1)=0,
\end{cases}
\end{equation}
and 
\begin{equation}\label{eq:wsigma}\dfrac{\displaystyle\int_0^1  (w')^2 \,dx}{\displaystyle\int_0^1 \frac{w^2}{h}\,dx}=\sigma_1(h).\end{equation}
Following classical arguments (see for instance \cite{FNT12}) we have that $v$ vanishes in one and only one point $x_0\in(0,1)$, so that $w'$ vanishes only in $x_0$. Without loss of generality, we can assume that $w'$ is positive in $[0,x_0)$ and it is negative in $(x_0,1]$. Let now $\varphi\in H^1_0(0,1)$ be such that
\[\int_0^1 \dfrac{\varphi^2}{h}\,dx<+\infty.\]
Then, for every $0<x<x_0$, we have that
\[\begin{split}
\dfrac{1}{h(x)}\varphi^2(x)&\le \dfrac{1}{h(x)}\left(\int_0^x \dfrac{(\varphi'(t))^2}{w'(t)}\,dt\right)\left(\int_0^x w'(t)\,dt\right)\\[15 pt]
&=\dfrac{w(x)}{h(x)}\int_0^x \dfrac{(\varphi'(t))^2}{w'(t)}\,dt.
\end{split}\]
Then, integrating from $0$ to $x_1<x_0$, we get
\[\begin{split}
\int_0^{x_1}\dfrac{\varphi^2(x)}{h(x)}\,dx&\le\int_0^{x_1} \dfrac{w(x)}{h(x)}\int_0^x \dfrac{(\varphi'(t))^2}{w'(t)}\,dt\,dx\\[15 pt]
&=\int_0^{x_1}\dfrac{(\varphi'(t))^2}{w'(t)}\int_t^{x_1}\dfrac{w(x)}{h(x)}\,dx\,dt.
\end{split}\]
Using \eqref{cange_var eq}, and the fact that $w'(x_1)>0$, then we have
\[\int_0^{x_1}\dfrac{\varphi^2(x)}{h(x)}\,dx\le \dfrac{1}{\sigma_1(h)}\int_0^{x_1} (\varphi'(t))^2\,dt.\]
Letting $x_1$ go to $x_0$ we have
\begin{equation}
    \label{left}
    \int_0^{x_0}\dfrac{\varphi^2(x)}{h(x)}\,dx\le \dfrac{1}{\sigma_1(h)}\int_0^{x_0} (\varphi'(x))^2\,dx.
\end{equation}
Similar computations can be done in the case $x>x_0$, so that we have 
\begin{equation}
    \label{right}
     \int_{x_0}^1\dfrac{\varphi^2(x)}{h(x)}\,dx\le \dfrac{1}{\sigma_1(h)}\int_{x_0}^1 (\varphi'(x))^2\,dx.
\end{equation}
Joining \eqref{left} and \eqref{right} we have
\[\int_0^1\dfrac{\varphi^2(x)}{h(x)}\,dx\le \dfrac{1}{\sigma_1(h)}\int_0^1 (\varphi'(x))^2\,dx,\]
that is
\begin{equation}\label{eq:gesigma}\dfrac{\displaystyle\int_0^1  (\varphi')^2 \,dx}{\displaystyle\int_0^1 \frac{\varphi^2}{h}\,dx}\ge \sigma_1(h).\end{equation}
Since $w$ is an admissible function, the assertion follows from \eqref{eq:wsigma} and \eqref{eq:gesigma}.
\end{proof}

We now define the rearrangement mentioned above. Let
\[
w: [0,1]\to \R
\]
be a quasi-concave piecewise $C^1$ function such that
\[
\abs{\{w'=0\}}=0,  \qquad w(0)=w(1)=0,
\]
and let us denote by 
\[
w_M=\max_{[0,1]} w, 
\]
and by $x_M$ the maximum point of $w$. 
We aim to rearrange $w$ in such a way that the derivative of the rearranged function $w^\sharp$ concentrates at the left of the new maximum point $x_M^*$. 

For every $t \in (0,w_M)$, we define $(x_t,y_t):= \left\{ w(x) >t\right\}$, and the distribution functions
\begin{equation}
\label{eq: mu12}
\begin{split}
\eta_1(t)=x_M-x_t=\abs{\set{w>t}\cap (0,x_M)}, \\
\eta_2(t)=y_t-x_M=\abs{\set{w>t}\cap (x_M,1)}.
\end{split}
\end{equation}
Notice that 
\[
\eta_{1} : (0,w_M) \to (0,x_M), \qquad \qquad
\eta_{2} : (0,w_M) \to (0,1-x_M)
\]
are both decreasing, invertible, absolutely continuous functions, and that, for a.e. $t\in[0,1]$,
\begin{gather*}
	\eta_1' (t)= -\frac{1}{\abs{w'(x_t)}}, \qquad \qquad
		\eta_2' (t)= -\frac{1}{\abs{w'(y_t)}}.
\end{gather*}
Let us now define the rearranged distribution functions in such a way that, for a.e. $t\in[0,1]$,
\begin{equation}
	\label{min,max}
	\begin{gathered}
		\eta_{\ast,1}' (t)= \max\{	\eta_1' (t), 	\eta_2' (t) \},\\
		\eta_{\ast,2}' (t)=\min\{	\eta_1' (t), 	\eta_2' (t) \},
	\end{gathered}
\end{equation}
namely,  
\begin{equation}
\label{eq: mu12*}
\begin{split}
	\eta_{\ast,1} (t):= - \int_t^{w_M} 	 \max\{	\eta_1' (s), 	\eta_2' (s) \} \, ds,\\
	\eta_{\ast,2} (t):= - \int_t^{w_M} 	 \min\{	\eta_1' (s), 	\eta_2' (s) \} \, ds.
 \end{split}
\end{equation}

\begin{oss}
Here we emphasize some properties of these distribution functions:
\begin{itemize}
	\item for every $t\in(0,w_M)$, we have
         \[
           \eta_{1}(t)+\eta_{2}(t)= \eta_{\ast,1}(t)+\eta_{\ast,2}(t)=\abs{\{w>t\}};
         \]
    \item  by \eqref{min,max}, we have that, for a.e. $t\in(0,w_M)$,
    \[
    \begin{split}
        \frac{1}{\abs{\eta_{\ast,1}' (t)}}&= \max\left\{	\frac{1}{\abs{\eta_1' (t)}},\frac{1}{\abs{\eta_2' (t)}}\right\} \\[5 pt]
        &=\max\{\abs{w'(x_t)},\abs{w'(y_t)} \},
    \end{split}
    \]
    and
    \[
    \begin{split}
        \frac{1}{\abs{\eta_{\ast,2}' (t)}}&= \min\left\{	\frac{1}{\abs{\eta_1' (t)}},\frac{1}{\abs{\eta_2' (t)}}\right\} \\[5 pt]
        &=\min\{\abs{w'(x_t)},\abs{w'(y_t)} \}.
    \end{split}
    \]
   \item By \eqref{min,max}, we have 
    \begin{equation}
    \label{eq: 1/mi'+1/mi'}
      \frac{1}{\abs{\eta_{\ast,1}' (t)}^{\alpha}}+\frac{1}{\abs{\eta_{\ast,2}' (t)}^{\alpha}} =\frac{1}{\abs{\eta_{1}' (t)}^{\alpha}}+\frac{1}{\abs{\eta_{2}' (t)}^{\alpha}}
    \end{equation}
    for every $\alpha\in \R$.
    \item for $t=0$, we denote by
         \[
             x^\ast_M:=\eta_{\ast,1}(0)= 1-\eta_{\ast,2}(0),
         \]
    this will play the role of the maximum point for the rearranged function.
	\item the functions 
        \[
            \eta_{\ast,1} : (0,w_M) \to (0,x^\ast_M), \qquad \qquad	\eta_{\ast,2} : (0,w_M) \to (0,1-x^\ast_M)
        \]
        are decreasing, invertible, absolutely continuous functions.
\end{itemize}
\end{oss}
We now define the rearrangement $w^\sharp$ as follows:
\begin{defi}
	\label{rearr}
	Let $w$ be a quasi-concave piecewise $C^1$ function such that
\[
\abs{\{w'=0\}}=0,  \qquad w(0)=w(1)=0,
\]
    and let $\eta_{1},\eta_{2},\eta_{\ast,1}$ and $\eta_{\ast,2}$ be the functions defined in \eqref{eq: mu12}, and \eqref{eq: mu12*}. We define the competitor $w^\#$ as 
	\begin{equation*}
		w^\sharp (x)= 
		\begin{cases}
			w_m - \eta_{\ast,1}^{-1}(x) \quad &\text{ if } x \leq x^\ast_M, \\
			w_m - \eta_{\ast,2}^{-1}(1-x) \quad &\text{ if } x > x^\ast_M.
		\end{cases}
	\end{equation*}
\end{defi} 

\begin{figure}
    \includegraphics[width=\textwidth]{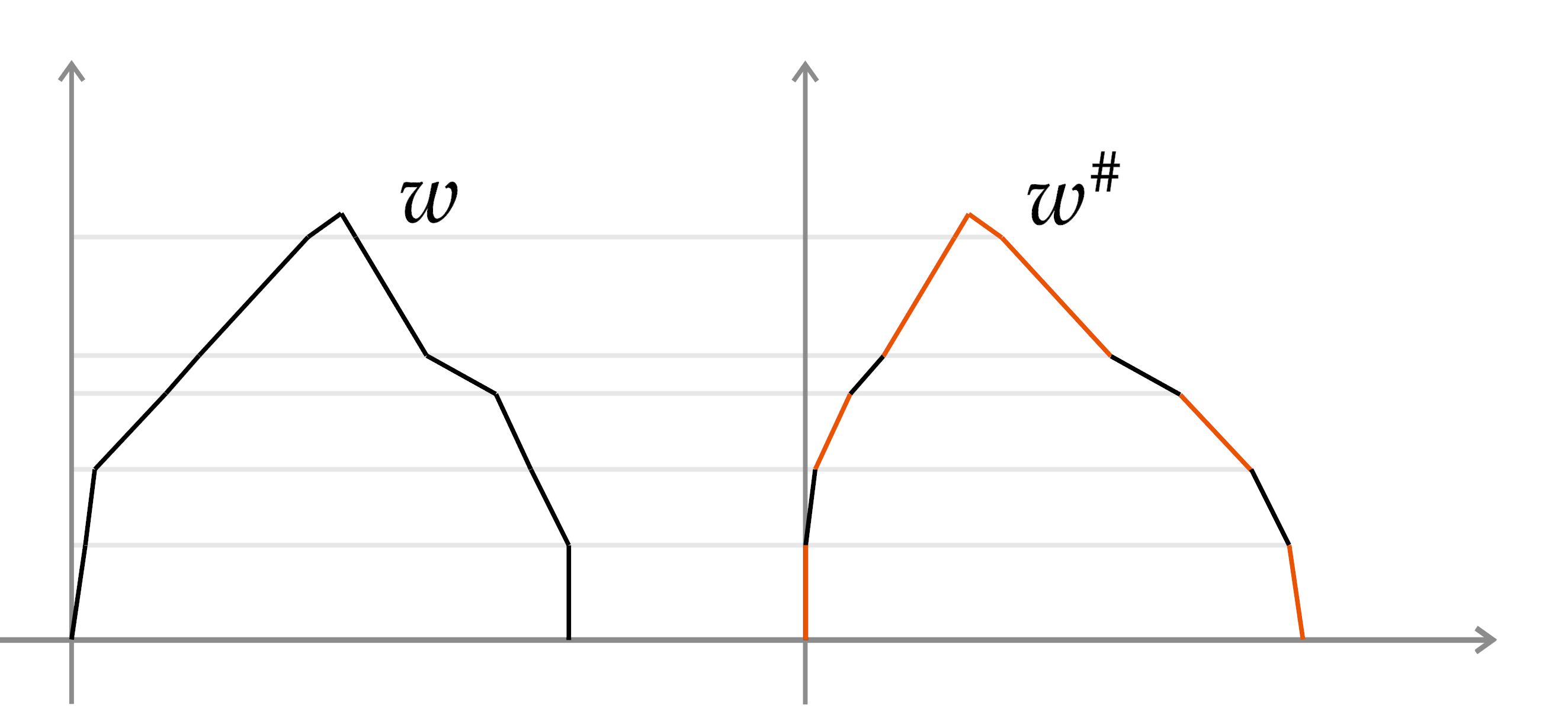}
    \caption{Function $w^\#$ when $w$ is a quasi-concave affine function}
\end{figure}

\begin{oss}
From the definition we have that $w^\sharp$ is increasing in $[0,x_M^\ast)$ and decreasing in $(x_M^\ast,0]$, so that $w^\sharp$ is quasi-concave. Moreover, we have that $w^\sharp$ and $w$ are equi-measurable, i.e.
\[
\norma{w^\sharp}_{L^p(0,1)}=\norma{w}_{L^p(0,1)}
\]
for every $p\in[1,+\infty]$. 
\end{oss}
We now prove some useful properties of this rearrangement.
\begin{lemma}
	\label{simmetric}
	Let $w$ be a quasi-concave piecewise $C^1$ function such that
    \[
    \abs{\{w'=0\}}=0,  \qquad w(0)=w(1)=0,
    \]
    and let $w^\sharp$ be the competitor defined in \autoref{rearr}. Then 
	$$
	w^\sharp(x)= (w(1-x))^\sharp.
	$$
\end{lemma}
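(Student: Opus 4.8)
The plan is to trace the reflection $\tilde w(x):=w(1-x)$ through every object entering \autoref{rearr}, and to exploit the single structural fact that the operations $\max$ and $\min$ in the definition \eqref{eq: mu12*} are symmetric in their two arguments. First I would record that $\tilde w$ satisfies the same hypotheses as $w$: it is quasi-concave, piecewise $C^1$, with $\abs{\{\tilde w'=0\}}=0$ and $\tilde w(0)=\tilde w(1)=0$. Since $\tilde w'(x)=-w'(1-x)$, it has the same maximum value $w_M$ and its maximum point is $\tilde x_M=1-x_M$; in particular reflection sends the ``left half'' $(0,x_M)$ to the ``right half'' $(\tilde x_M,1)$ and vice versa, which is the mechanism that will produce a swap below.

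Next I would compute the superlevel sets of $\tilde w$. As $\{\tilde w>t\}=1-\{w>t\}$, if $\{w>t\}=(x_t,y_t)$ then $\{\tilde w>t\}=(1-y_t,1-x_t)$. Feeding this into the definition \eqref{eq: mu12} of the two distribution functions gives, for every $t\in(0,w_M)$,
\[
\tilde\eta_1(t)=\tilde x_M-(1-y_t)=y_t-x_M=\eta_2(t),\qquad
\tilde\eta_2(t)=(1-x_t)-\tilde x_M=x_M-x_t=\eta_1(t).
\]
Thus reflection simply interchanges the roles of $\eta_1$ and $\eta_2$, and consequently of $\eta_1'$ and $\eta_2'$ almost everywhere.

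The decisive step is then essentially free. Substituting $\tilde\eta_1'=\eta_2'$ and $\tilde\eta_2'=\eta_1'$ into \eqref{eq: mu12*}, and using that $\max$ and $\min$ are unchanged under permutation of their arguments, I obtain
\[
\tilde\eta_{\ast,1}=\eta_{\ast,1},\qquad \tilde\eta_{\ast,2}=\eta_{\ast,2}.
\]
In particular the rearranged maximum point is preserved, $\tilde x_M^\ast=\tilde\eta_{\ast,1}(0)=\eta_{\ast,1}(0)=x_M^\ast$, and the inverse functions coincide, $\tilde\eta_{\ast,1}^{-1}=\eta_{\ast,1}^{-1}$ and $\tilde\eta_{\ast,2}^{-1}=\eta_{\ast,2}^{-1}$. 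Plugging these identities into the piecewise formula of \autoref{rearr} applied to $\tilde w$ reproduces verbatim the formula defining $w^\sharp$, which is exactly the claimed identity $w^\sharp(x)=(w(1-x))^\sharp$.

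I do not expect a genuine analytic obstacle: the argument is pure bookkeeping, and the only point requiring care is the orientation convention, i.e.\ verifying the swap $\eta_1\leftrightarrow\eta_2$ induced by the reflection. Once this is in place, the symmetry of $\max$ and $\min$ does all the remaining work, so the lemma follows without any estimate.
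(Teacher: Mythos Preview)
Your proposal is correct and follows essentially the same approach as the paper's proof: the paper also sets $v(x)=w(1-x)$, observes that the associated distribution functions satisfy $\nu_1'=\eta_2'$ and $\nu_2'=\eta_1'$, and concludes $\nu_{\ast,1}'=\eta_{\ast,1}'$, $\nu_{\ast,2}'=\eta_{\ast,2}'$ from the symmetry of $\max$ and $\min$. You have simply written out the bookkeeping (the superlevel sets, the reflected maximum point, and the identity $\tilde\eta_i=\eta_{3-i}$ at the level of the functions themselves rather than only their derivatives) in more detail than the paper does.
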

\begin{proof}
	 Let us set $v(x)= w(1-x)$	and $\nu_1, \nu_2, \nu_{\ast,1}$ and $\nu_{\ast,2}$ the equivalent quantities defined for $v$. Then we have 
  \[
  \nu'_1(t)= \eta'_2(t), \qquad \qquad\nu'_2(t)= \eta'_1(t),
  \]
  and, in particular, 
  \[
  \nu'_{\ast,1}(t)= \eta'_{\ast,1}(t), \qquad \qquad\nu'_{\ast,2}(t)= \eta'_{\ast,2}(t).
  \]
\end{proof}

\begin{lemma}
	\label{rearrenggrad}
	Let $w$ be a quasi-concave piecewise $C^1$ function such that
   \[
\abs{\{w'=0\}}=0,  \qquad w(0)=w(1)=0,
\]
    and let $w^\sharp$ be its competitor defined in \autoref{rearr}. Then,
	\begin{equation}
    \label{eq: normep}
				\norma{(w^\sharp)'}_{L^p(0,1)}=\norma{w'}_{L^p(0,1)} \qquad  \forall \, p\ge 1.
	\end{equation}
\end{lemma}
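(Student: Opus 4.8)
The plan is to reduce both $\norma{w'}_{L^p(0,1)}^p$ and $\norma{(w^\sharp)'}_{L^p(0,1)}^p$ to one and the same integral over the level variable $t$, and then to read off the equality from the pointwise identity \eqref{eq: 1/mi'+1/mi'}. The bridge in both cases is the coarea formula applied in dimension one, with $f=w$ and weight $u=\abs{w'}^{p-1}$ for the original function, and with $f=w^\sharp$ and $u=\abs{(w^\sharp)'}^{p-1}$ for its rearrangement.

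First I would record the geometric structure of the two functions. Since $w$ is quasi-concave with $w(0)=w(1)=0$, it is increasing on $(0,x_M)$ and decreasing on $(x_M,1)$; the hypothesis $\abs{\{w'=0\}}=0$ upgrades this to \emph{strict} monotonicity on each branch, so that for a.e.\ $t\in(0,w_M)$ the level set $w^{-1}(t)$ consists of exactly the two points $x_t\in(0,x_M)$ and $y_t\in(x_M,1)$. The same structure holds for $w^\sharp$, which is quasi-concave with maximum at $x^\ast_M$, so that $(w^\sharp)^{-1}(t)$ consists of one point in $(0,x^\ast_M)$ and one in $(x^\ast_M,1)$. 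Applying the coarea formula (with $\mathcal H^0$ the counting measure) to $f=w$, $u=\abs{w'}^{p-1}$, and using $\abs{w'(x_t)}=1/\abs{\eta_1'(t)}$, $\abs{w'(y_t)}=1/\abs{\eta_2'(t)}$, I obtain
\[
\norma{w'}_{L^p(0,1)}^p=\int_0^1 \abs{w'}^{p-1}\abs{w'}\,dx=\int_0^{w_M}\left(\frac{1}{\abs{\eta_1'(t)}^{p-1}}+\frac{1}{\abs{\eta_2'(t)}^{p-1}}\right)dt.
\]

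Running the identical computation for $w^\sharp$, and using that on its two monotone branches $\abs{(w^\sharp)'}$ equals $1/\abs{\eta_{\ast,1}'(t)}$ and $1/\abs{\eta_{\ast,2}'(t)}$ at the two points of $(w^\sharp)^{-1}(t)$ — which is precisely how $w^\sharp$ was assembled from $\eta_{\ast,1},\eta_{\ast,2}$ in \autoref{rearr} — I get
\[
\norma{(w^\sharp)'}_{L^p(0,1)}^p=\int_0^{w_M}\left(\frac{1}{\abs{\eta_{\ast,1}'(t)}^{p-1}}+\frac{1}{\abs{\eta_{\ast,2}'(t)}^{p-1}}\right)dt.
\]
The proof then closes by invoking \eqref{eq: 1/mi'+1/mi'} with $\alpha=p-1$, which says exactly that the two integrands agree for a.e.\ $t$, whence the two $L^p$ norms coincide.

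The only genuine obstacle is the rigorous justification of the coarea/change-of-variables step under the mere piecewise-$C^1$ regularity. I must check that $\abs{\{w'=0\}}=0$ indeed forces strict monotonicity on each branch (so the level sets are singletons and the weight $\abs{w'}^{p-1}$ is well defined along $w^{-1}(t)$), that $w,w^\sharp\in W^{1,1}_{\loc}(0,1)$ so the coarea formula applies, and that the finitely many corner points, being $\mathcal H^1$-null, are harmless. Everything else is bookkeeping, since the decisive algebraic fact that $\{\eta_{\ast,1}'(t),\eta_{\ast,2}'(t)\}=\{\eta_1'(t),\eta_2'(t)\}$ as unordered pairs has already been packaged into \eqref{eq: 1/mi'+1/mi'}.
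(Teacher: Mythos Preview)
Your proof is correct and follows essentially the same approach as the paper: apply the one-dimensional coarea formula with weight $\abs{w'}^{p-1}$ (resp.\ $\abs{(w^\sharp)'}^{p-1}$) to express each $L^p$ norm as an integral over levels $t$ involving $\abs{\eta_i'(t)}^{-(p-1)}$ (resp.\ $\abs{\eta_{\ast,i}'(t)}^{-(p-1)}$), and then invoke \eqref{eq: 1/mi'+1/mi'} with $\alpha=p-1$ to conclude. Your additional remarks on strict monotonicity and the harmlessness of corner points are justified caveats that the paper leaves implicit.
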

\begin{proof}
	Let us compute separately the norms: by the coarea formula (see \autoref{coarea}), we get
	\begin{equation}
    \label{eq: normaw'}
        \begin{split}
    		 \int_0^1 \abs{w'(x)}^p\, dx &= \int_0^{w_M} \int_{\{w=t\}} \abs{w'(x)}^{p-1}\, d\mathcal{H}^0(x)\,dt\\[7 pt]
          &= \int_0^{w_M} \left(\abs{w'(x_t)}^{p-1}+ \abs{w'(y_t)}^{p-1}\right)\, dt\\[7 pt]
          &= \int_0^{w_M} \left(\frac{1}{\abs{\eta_1'(t)}^{p-1}}+ \frac{1}{\abs{\eta_2'(t)}^{p-1}}\right)\, dt.
      \end{split}
	\end{equation}
	Analogously,
	\begin{equation}
    \label{eq: normaww'}
		\int_0^1 \abs{(w^\sharp)'(x)}^p\, dx= \int_0^{w_M} \frac{1}{\abs{\eta_{\ast,1}'(t)}^{p-1}}+ \frac{1}{\abs{\eta_{\ast,2}'(t)}^{p-1}}\, dt.
	\end{equation}
Joining \eqref{eq: normaw'}, \eqref{eq: normaww'}, and \eqref{eq: 1/mi'+1/mi'}, we get \eqref{eq: normep}.
\end{proof}
We now state the property of $w^\sharp$ that will be crucial in the proof of \autoref{teor: minstek}.
\begin{lemma}
	\label{comaprison}
    Let $w$ be a quasi-concave piecewise $C^1$ function such that
\[
\abs{\{w'=0\}}=0,  \qquad w(0)=w(1)=0,
\]
    and let $w^\sharp$ be its competitor defined in \autoref{rearr}. Assume that
    \[
    h:(0,1)\to [0,+\infty)
    \]
    is a concave function, and let $h^\ast$ be its increasing rearrangement. Then
\begin{equation*}
	\int_{0}^1 \frac{w^2}{h}\,dx  \leq \int_{0}^1 \frac{(w^\sharp)^2}{h^\ast}\,dx.
\end{equation*} 
\end{lemma}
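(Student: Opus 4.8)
The plan is to collapse the inequality to a purely one–dimensional, measure–theoretic statement about overlaps of intervals, by peeling off the numerator and the denominator separately with Cavalieri's principle. First I would write, using $w^2(x)=\int_0^{w_M}2t\,\mathbf 1_{\{w>t\}}(x)\,dt$ and Tonelli,
\[
\int_0^1\frac{w^2}{h}\,dx=\int_0^{w_M}2t\left(\int_{\{w>t\}}\frac{dx}{h}\right)dt,
\]
and the analogous identity for $w^\sharp$ and $h^\ast$ (recalling that $w^\sharp$ is equi-measurable with $w$, so $\max w^\sharp=w_M$). It therefore suffices to prove, for a.e.\ $t\in(0,w_M)$, the per-level inequality
\[
\int_{\{w>t\}}\frac{dx}{h}\le\int_{\{w^\sharp>t\}}\frac{dx}{h^\ast}.
\]
Peeling the denominator in the same way, through $h(x)^{-1}=\int_{h(x)}^{\infty}\tau^{-2}\,d\tau$ and Tonelli, reduces this further to the requirement that for every $\tau>0$
\[
\bigl|\{w>t\}\cap\{h<\tau\}\bigr|\le\bigl|\{w^\sharp>t\}\cap\{h^\ast<\tau\}\bigr|.
\]

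Now I would exploit the geometry. Since $w$ and $w^\sharp$ are quasi-concave, $J:=\{w>t\}$ and $J^\sharp:=\{w^\sharp>t\}$ are intervals of the \emph{same} length $\ell=\eta_1(t)+\eta_2(t)=\eta_{\ast,1}(t)+\eta_{\ast,2}(t)$; since $h$ is concave, $K:=\{h\ge\tau\}$ is an interval, and since $h^\ast$ is increasing, $K^\ast:=\{h^\ast\ge\tau\}=[\,m_\tau,1]$ with $m_\tau=|\{h<\tau\}|$. Taking complements inside $J$ and $J^\sharp$ (both of length $\ell$), the last displayed inequality is equivalent to $|J^\sharp\cap K^\ast|\le|J\cap K|$. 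I then record the positions of the four intervals through their end–gaps: writing $J=(\alpha,1-\beta)$ and $J^\sharp=(\alpha^\ast,1-\beta^\ast)$, the definitions of the rearranged distribution functions give
\[
\alpha=\int_0^t|\eta_1'|\,ds,\quad \beta=\int_0^t|\eta_2'|\,ds,\quad \alpha^\ast=\int_0^t\min\{|\eta_1'|,|\eta_2'|\}\,ds,\quad \beta^\ast=\int_0^t\max\{|\eta_1'|,|\eta_2'|\}\,ds,
\]
whence $\alpha^\ast\le\min\{\alpha,\beta\}$, $\beta^\ast\ge\max\{\alpha,\beta\}$ and $\alpha^\ast+\beta^\ast=\alpha+\beta$; moreover $K=(\gamma,1-\delta)$ and $K^\ast=(\gamma+\delta,1)$.

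With these formulas, $|J^\sharp\cap K^\ast|\le|J\cap K|$ becomes the elementary bound
\[
\max\{\alpha,\gamma\}+\max\{\beta,\delta\}\le \beta^\ast+\max\{\alpha^\ast,\gamma+\delta\},
\]
(to be checked whenever the right–hand overlap is nonnegative, the claim being trivial otherwise), which I would verify by a short case distinction according to which terms realize the four maxima, using only the three relations $\alpha^\ast\le\min\{\alpha,\beta\}$, $\beta^\ast\ge\max\{\alpha,\beta\}$, $\alpha^\ast+\beta^\ast=\alpha+\beta$. Integrating the resulting measure inequality against $\tau^{-2}\,d\tau$ recovers the per-level inequality, and integrating that against $2t\,dt$ gives the lemma.

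I expect the \textbf{main obstacle} to be the elementary interval–overlap inequality, for the following reason: $w$ and $h$ are completely unrelated, so a priori $J$ and $K$ may sit in any relative position, and in the ``bad'' configuration ($J$ to the right of $K$) moving $J$ left and $K^\ast$ right could a priori \emph{increase} the overlap. What rescues the argument is precisely the coupling between the two rearrangements: the $\sharp$–rearrangement shifts $J$ to the left (quantified by $\alpha^\ast\le\min\{\alpha,\beta\}$), while the increasing rearrangement forces the super-level set of $h$ all the way to the right (namely $K^\ast=[\,\gamma+\delta,1]$), so that the overlap can only shrink. Isolating this cancellation — and organizing the proof so that the double Cavalieri reduction lands exactly on the three gap relations above — is the delicate part; the rest is bookkeeping.
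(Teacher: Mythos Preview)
Your argument is correct. The double Cavalieri reduction lands exactly where you say it does, and the interval--overlap inequality
\[
\max\{\alpha,\gamma\}+\max\{\beta,\delta\}\le \beta^\ast+\max\{\alpha^\ast,\gamma+\delta\}
\]
is indeed an easy case check once you have $\alpha^\ast\le\min\{\alpha,\beta\}$, $\beta^\ast\ge\max\{\alpha,\beta\}$, $\alpha^\ast+\beta^\ast=\alpha+\beta$ (for the record: if $\alpha^\ast\ge\gamma+\delta$ both sides equal $\alpha+\beta$; if $\alpha^\ast<\gamma+\delta$ the four sub-cases according to $\alpha\lessgtr\gamma$, $\beta\lessgtr\delta$ are immediate, the only nontrivial one being $\alpha>\gamma$, $\beta>\delta$, which reduces exactly to $\alpha^\ast<\gamma+\delta$).

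The paper takes a different route. It slices only $1/h$ via Fubini, reducing to $\int_{D_t} w^2\le\int_{D_t^\ast}(w^\sharp)^2$ with $D_t=\{1/h>t\}=(0,\tilde x_t)\cup(\tilde y_t,1)$ and $D_t^\ast=(0,\tilde x_t+1-\tilde y_t)$; it then introduces the threshold $m=\min\{w(\tilde x_t),w(\tilde y_t)\}$, splits $w^2=\min\{w,m\}^2+(w^2-m^2)_+$, and treats the two pieces separately, the second one via the level-set inclusion $|\{w^\sharp>r\}\cap(0,\tilde x_t)|\ge|\{w>r\}\cap(0,\tilde x_t)|$. Your approach slices \emph{both} $w^2$ and $1/h$, and so bypasses the $m$-splitting entirely: the whole lemma collapses to a single scalar inequality between lengths of interval overlaps. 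This is more symmetric and arguably more transparent; the paper's decomposition, on the other hand, isolates more explicitly how the competitor $w^\sharp$ pushes mass toward the left endpoint (the inclusion $x_r^\ast\le x_r$, $y_r^\ast\le y_r$), which is morally the same fact as your gap relations but stated at the level of the function rather than of a single parameter.
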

\begin{proof}
	By Fubini's theorem, we can write
	\begin{equation*}
		\int_0^1  \frac{w^2(x)}{h(x)}\, dx =   \int_0^1 w^2(x) \int_0^{\frac{1}{h(x)}} \, dt\,dx= \int_0^\infty \int_{\left\{\frac{1}{h(x)}>t\right\}}  w^2(x)\, dx\, dt.
	\end{equation*}
	The same computation leads to
    \[
        \int_0^1  \frac{(w^\sharp)^2(x)}{h^\ast(x)}\, dx =\int_0^\infty \int_{\left\{\frac{1}{h^\ast(x)}>t\right\}}  (w^\sharp)^2(x)\, dx\, dt.
    \]
    Hence, to prove the lemma it is sufficient to prove that for a.e. $t>0$
	\begin{equation*}
		 \int_{\left\{\frac{1}{h(x)}>t\right\}} w^2(x)\, dx 	\leq	 \int_{\left\{\frac{1}{h^\ast(x)}>t\right\}} (w^\sharp)^2(x)\, dx.
	\end{equation*}
For every $t\in(0,\norma{1/h}_\infty)$, let us define 
\[
D_t:=\left\{\frac{1}{h(x)}>t\right\}=(0,\tilde{x}_t) \cup (\tilde{y}_t,1),
\]
for some $\tilde{x}_t, \tilde{y}_t\in(0,1)$. In an analogous way, by the definition of increasing rearrangement (see \autoref{defi:incre}), we have
\[
D^\ast_t=\left\{\frac{1}{h^\ast(x)}>t\right\}= (0, \tilde{x}_t+1-\tilde{y}_t).
\]
Let $m=\min\{w(\tilde{x}_t), w(\tilde{y}_t)\}$, and let us define the following auxiliary functions
\[
	f(x)= \min\{w(x), m\}^2, \qquad \qquad	g(x)= (w^2-m^2)_+,
 \]
 so that 
\[
	\int_{D_t} w^2\,dx = \int_{D_t} f \,dx+ \int_{D_t} g\,dx.
\]
Similarly, we define 
\[
	f_0(x)= \min\{w^\sharp(x), m\}^2, \qquad \qquad
	g_0(x)= ((w^\sharp)^2-m^2)_+,
 \]
 so that
\[
	\int_{D_t^*} (w^\sharp)^2\,dx = \int_{D_t^*} f_0\,dx + \int_{D_t^*} g_0\, dx.
\]
 We now evaluate separately the two terms:
 \begin{enumerate}
		\item  By the definition of $m$, we have that
                \[
                w(x)>m \qquad \forall \, x\in (0,1)\setminus D_t.
                \]
                Therefore, since $f$ and $f_0$ are equi-measurable, we get
		\begin{equation}
        \label{eq: f<f0}
        \begin{split}
			\int_{D_t} f(x)\, dx &= \int_0^1 f(x)\,dx - (1-\abs{D_t})m^2 \\[7 pt]
                        &= \int_0^1 f_0(x)\, dx - \int_{(0,1) \setminus D^\ast_t} m^2\,dx \\[7 pt]
			&\leq \int_0^1 f_0(x)\,dx - \int_{(0,1) \setminus D^\ast_t} f_0(x)\,dx  \\[7 pt]
   &=\int_{D^\ast_t} f_0(x)\,dx,
        \end{split}
		\end{equation}
        where we have used that $\abs{D_t}=\abs{D_t^*}$, and that $m\ge f_0$;
		\item \autoref{simmetric} allows us to assume without loss of generality that $w(\tilde{y}_t)=m$. Therefore, the quasi-concavity of $w$ ensures that
        \[
        w(x)\le m \qquad\forall \, x\in(\tilde{y}_t,1),
        \]
        and we can write
		\begin{equation}
        \label{eq: intg}
		\int_{D_t} g(x)\, dx= \int_0^{\tilde{x}_t} (w^2(x)-m^2)_+\,dx = \int_{m}^{w_M} 2r \abs{\{w>r\} \cap (0,\tilde{x}_t)}\, dr.
		\end{equation}
		On the other hand,
		\begin{equation}
        \label{eq: intg0}
        \begin{split}
			\int_{D^\ast_t}g_0(x)\,dx &= \int_0^{\tilde{x}_t+1-\tilde{y}_t}g_0(x)\,dx \\[7 pt]
            &\geq \int_0^{\tilde{x}_t}g_0(x)\,dx \\[7 pt]
            &= \int_{m}^{w_M} 2r \abs{\{w^\sharp>r\} \cap (0,\tilde{x}_t)}\, dr.
        \end{split}
		\end{equation}

        We now claim that
		\begin{equation}
        \label{eq: misww}
		\abs{\{w^\sharp>r\} \cap (0,\tilde{x}_t)} \geq \abs{\{w>r\} \cap (0,\tilde{x}_t)}.
		\end{equation}
        Indeed, if we let 
        \[
         \{w>r\}=(x_r,y_r), \qquad \qquad \{w^\sharp>r\}=(x^*_r,y^*_r),
        \]
        then \eqref{min,max} gives
        \[
        x^\ast_r = -\int_0^r\eta'_{\ast,1} (s) \, ds   \leq -\int_0^r\eta'_{1} (s) \, ds=x_r,
        \]
    while the equi-misurability of $w$ and $w^\sharp$ gives
        \[
        y^\ast_r =(y^\ast_r-x^\ast_r)+x^\ast_r=(y_r-x_r)+x^\ast_r\le y_r.
        \]
        Therefore we get
        \[
        \begin{split}
            \abs{\{w^\sharp>r\}\cap (0,\tilde{x}_t)}=\abs{\{w>r\}\cap (0,\tilde{x}_t)} \qquad \qquad \text{if }y_r\le \tilde{x}_t, \\[5 pt]
            \abs{\{w^\sharp>r\}\cap (0,\tilde{x}_t)}>\abs{\{w>r\}\cap (0,\tilde{x}_t)} \qquad \qquad \text{if }y_r> \tilde{x}_t, 
         \end{split}
        \]
        thus the claim is proved. Finally, joining \eqref{eq: intg}, \eqref{eq: intg0}, and \eqref{eq: misww}, we have that
        \begin{equation}
        \label{eq: eq:intgg0}
			\int_{D^\ast_t}g_0(x)\,dx \geq 	\int_{D_t}g(x)\,dx,
		\end{equation}
 \end{enumerate}
 and the result follows from \eqref{eq: eq:intgg0}, and \eqref{eq: f<f0}.
\end{proof}

We now turn our attention to the eigenvalue problem. 

\begin{proof}[Alternative proof of \autoref{teor: minstek}]
Let $h\in\prof_1$, by \autoref{prop: sigma=hardy} we have that
\begin{equation}
\label{eq:varcar2}
\sigma_1(h)=\min\Set{\dfrac{\displaystyle\int_0^1  (\varphi')^2 \,dx}{\displaystyle\int_0^1 \frac{\varphi^2}{h}\,dx}\colon\, \varphi\in H^1_0(0,1)}.
\end{equation}
Let $w$ be a minimizer in \eqref{eq:varcar2}, then by \autoref{rearrenggrad}, and \autoref{comaprison}, we have
\begin{equation}\label{h>h_*}
	\sigma_1(h) = \ddfrac{\int_{0}^1 (w')^2}{\int_{0}^1 \frac{w^2}{h}} \geq \ddfrac{{\int_{0}^1 ((w^\sharp)')^2}}{{\int_{0}^1 \frac{(w^\sharp)^2}{h^\ast} }}\geq \sigma_1(h^\ast).
\end{equation}
By \autoref{prop: minextr}, and \autoref{prop: triextr}, we have that the minimum of $\sigma_1$ is a triangle $T_{x_0}$  for some $x_0\in[0,1]$. Let $h=T_{x_0}$, then $h_*=T_1$ and, from \eqref{h>h_*}, we have
\[\sigma_1(T_{x_0})\ge\sigma_1(T_1),\]
which concludes the proof.
\end{proof}

\section{Ratio \texorpdfstring{$\mu/\sigma$}{mu/sigma}}
\label{sec: mi/sigma}
In this section, we prove \autoref{prop: mu=sigma} and \autoref{teor: mu/sigma}. We begin by defining an operator $\mathcal{G}$ on $\prof$ as follows: let $h\in\prof$, and let 
\begin{equation}
\label{eq: H}
H(x)=\dfrac{1}{\int_0^1 h(t)\,dt}\int_0^x h(t)\,dt;
\end{equation}
we notice that $H$ is a strictly increasing function such that $H(0)=0$ and $H(1)=1$. We then define
\[
\mathcal{G}(h)(x)=h^2(H^{-1}(x)).
\]
\begin{lemma}
    Let $h\in\prof$. Then $\mathcal{G}(h)\in \prof$, and the map 
    \[
    \mathcal{G}: \prof \to \prof
    \]
    is invertible.
\end{lemma}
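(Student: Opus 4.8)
The plan is to treat well-definedness ($\mathcal{G}(h)\in\prof$) and invertibility separately, the real work in both lying in showing that the change of variables preserves concavity. First I would record the regularity of the maps involved. Since $h\in\prof$ is concave, non-negative and not identically zero, a standard argument (an interior zero of a concave non-negative function forces $h\equiv 0$ by the chordal inequality) shows $h>0$ on $(0,1)$; hence $h$ is continuous on $(0,1)$ and, with $c:=\int_0^1 h\,dt$, the function $H$ in \eqref{eq: H} is a continuous strictly increasing bijection of $[0,1]$ onto itself, $C^1$ on $(0,1)$ with $H'=h/c$. Its inverse $H^{-1}$ is then a strictly increasing bijection, $C^1$ on $(0,1)$ with $(H^{-1})'(x)=c/h(H^{-1}(x))$, and Lipschitz on every compact subinterval of $(0,1)$ because there $h$ is bounded below by a positive constant.

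To show $\mathcal{G}(h)\in\prof$, write $g:=\mathcal{G}(h)=(h\circ H^{-1})^2$. Non-negativity and nontriviality are immediate ($g>0$ on $(0,1)$), and $g$ is bounded since $h\in L^\infty$. Concavity is the crux. On every $[a,b]\subset(0,1)$ the function $h$ is Lipschitz and $H^{-1}$ is $C^1$, so $g$ is locally Lipschitz, hence locally absolutely continuous, and the chain rule gives, for a.e.\ $x\in(0,1)$,
\[
g'(x)=2\,h(H^{-1}(x))\,h'(H^{-1}(x))\,(H^{-1})'(x)=2c\,h'(H^{-1}(x)),
\]
where $h'$ is the a.e.\ derivative of the concave function $h$, which may be taken non-increasing. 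Since $H^{-1}$ is increasing, $g'$ is non-increasing; a locally absolutely continuous function on $(0,1)$ with non-increasing derivative is concave. Thus $g\in\prof$.

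For invertibility I would exhibit an explicit two-sided inverse. Given $k\in\prof$, first note $k^{-1/2}\in L^1(0,1)$: near each endpoint either $k$ is bounded below by a positive constant, or (if $k$ vanishes there) concavity forces $k$ to dominate a positive multiple of the distance to that endpoint, so $k^{-1/2}$ is dominated by an integrable function. Set $\ell:=\int_0^1 k(s)^{-1/2}\,ds$ and $\Lambda(x):=\ell^{-1}\int_0^x k(s)^{-1/2}\,ds$, a continuous strictly increasing bijection of $[0,1]$, and define $\mathcal{G}^{-1}(k)(y):=\sqrt{k(\Lambda^{-1}(y))}$. Exactly as above, $\big(\mathcal{G}^{-1}(k)\big)'(y)=\tfrac{\ell}{2}\,k'(\Lambda^{-1}(y))$ is a positive multiple of the non-increasing function $k'$ composed with the increasing map $\Lambda^{-1}$, so $\mathcal{G}^{-1}(k)\in\prof$. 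It remains to verify that the two compositions reduce to the identity. Both reductions follow from a single change of variables in the integral defining the normalizing map: substituting $s=H(y)$ shows that the map $\Lambda_g$ associated with $g=\mathcal{G}(h)$ equals $H^{-1}$, and substituting $s=\Lambda(x)$ shows that the map $H_h$ associated with $h=\mathcal{G}^{-1}(k)$ equals $\Lambda^{-1}$; with these identifications $\mathcal{G}^{-1}(\mathcal{G}(h))(y)=\sqrt{h^2(y)}=h(y)$ and $\mathcal{G}(\mathcal{G}^{-1}(k))(x)=k(\Lambda^{-1}(\Lambda(x)))=k(x)$ follow directly, so $\mathcal{G}$ is a bijection.

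I expect the main obstacle to be the concavity verification under the minimal regularity of $\prof$ (functions that are merely concave and $L^\infty$): one must justify both the chain rule and the implication ``non-increasing derivative $\Rightarrow$ concave'' without a $C^2$ hypothesis, which is precisely the role of the local Lipschitz bounds on $h$ and on $H^{-1}$ away from the endpoints. A secondary technical point, used only for the inverse, is the endpoint integrability of $k^{-1/2}$, where the linear vanishing of concave functions at the endpoints enters once more.
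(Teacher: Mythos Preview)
Your proof is correct and follows essentially the same route as the paper: compute $g'(x)=2c\,h'(H^{-1}(x))$ to get concavity of $\mathcal{G}(h)$, construct the inverse via $\Lambda(x)=\ell^{-1}\int_0^x k^{-1/2}$, and check the compositions by a change of variables. If anything you are a bit more careful than the paper---you justify the integrability of $k^{-1/2}$ at the endpoints, you verify both compositions rather than just $\mathcal{G}\circ\mathcal{G}^{-1}=\mathrm{id}$, and your derivative formula $(\mathcal{G}^{-1}k)'=\tfrac{\ell}{2}k'\circ\Lambda^{-1}$ is actually the correct one (the paper's displayed formula $\tfrac{\beta k'}{2k}\circ K^{-1}$ appears to carry a spurious factor of $k$, though the conclusion there is unaffected since $k'/k$ is also non-increasing for concave positive $k$).
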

\begin{proof}
Since $h\in\prof$, then $h'$ is defined a.e. in $[0,1]$, and
$h'$ is decreasing. We also have that $H^{-1}$ is a locally Lipschitz function and
\begin{equation}
\label{Hmenounoderiv}
    \frac{d}{dx}H^{-1}(x)=\frac{1}{h(H^{-1}(x))}\int_0^1h(t)\,dt.
\end{equation}
Therefore, $\mathcal{G}(h)$ is a.e. differentiable and
\[
\frac{d}{dx}\mathcal{G}(h)(x)=2\alpha h'(H^{-1}(x)),
\]
where 
\[
\alpha=\int_0^1h(t)\,dt.
\]
Since $H^{-1}$ is an increasing function and $h'$ is decreasing, then $\mathcal{G}(h)$ is a concave function, and $\mathcal{G}(h)\in\prof$. 
\medskip 

Let $k\in\prof$ and define 
\begin{equation}
\label{eq: K}
K(x)=\dfrac{1}{\int_0^1 \frac{1}{\sqrt{k(t)}}\,dt}\int_0^x\frac{1}{\sqrt{k(t)}}\,dt,
\end{equation}
then we want to prove that
\begin{equation}
\label{eq: inverse}
\sqrt{k(K^{-1}(x))}=\mathcal{G}^{-1}(k)(x).
\end{equation}
First we prove that $\sqrt{k\circ K^{-1}}\in \prof$. By direct computation,
\[
\frac{d}{dx}\sqrt{k(K^{-1}(x))}=\frac{\beta k'(K^{-1}(x))}{2k(K^{-1}(x))} ,
\]
where
\[
\beta=\int_0^1\frac{1}{\sqrt{k(t)}}\,dt.
\]
This proves that $\sqrt{k\circ K^{-1}}$ is concave, since $K^{-1}$ is increasing and $h'/h$ is decreasing because of the concavity of $h$. On the other hand, to prove \eqref{eq: inverse}, we observe that with a change of variables we get
\[
\int_0^x\sqrt{k(K^{-1}(t))}\,dt=\frac{K(x)}{\int_0^1\frac{1}{\sqrt{k(t)}}\,dt},
\]
and by definition of $\mathcal{G}$ we get
\[
\mathcal{G}\left(\sqrt{k\circ K^{-1}}\right)(x)=k(x).
\]
\end{proof}
We now prove that $\mathcal{G}$ is the operator in \autoref{prop: mu=sigma}.
\begin{proof}[Proof of \autoref{prop: mu=sigma}]
    Let $v\in H^1(0,1)$ be a function such that
    \[
    \int_0^1 v(t)h(t)\,dt=0,
    \]
    and let $H$ denote the integral function defined in \eqref{eq: H}. The change of variables $H(t)=s$ yields
    \[
    \left(\int_0^1 h(t)\,dt\right)\int_0^1 v(t)\,h(t)\,dt=\int_0^1 v(H^{-1}(s))\,ds,
    \]
    \[
    \left(\int_0^1 h(t)\,dt\right)\int_0^1 (v')^2(t)\,h(t)\,dt=\int_0^1 (v')^2(H^{-1}(s))\,ds,
    \]
    and
    \[
    \left(\int_0^1 h(t)\,dt\right)\int_0^1 v^2(t)\,h(t)\,dt=\int_0^1 v^2(H^{-1}(s))\,ds.
    \]
    Let $w(x)=v(H^{-1}(x))$, then by \eqref{Hmenounoderiv},
    $$
    w'(x)=\left(\int_0^1h(t)\, dt \right)\, v'(H^{-1}(x))\,\bigl(\mathcal{G}(h)(x)\bigr)^{-\frac{1}{2}}.
    $$
    Hence,  
     \[
    \ddfrac{\int_0^1 (v')^2(t)\,h(t)\,dt}{\int_0^1 v^2(t)\,h(t)\,dt}=\left(\int_0^1 h(t)\,dt\right)^{-2}\ddfrac{\int_0^1 (w')^2(t)\,\mathcal{G}(h)(t)\,dt}{\int_0^1 w^2(t)\,dt}.
    \]
Choosing $v=v_\mu$ to be the eigenfunction of $\mu_1(h)$, then we get
\[
\mu_1(h)\ge \left(\int_0^1 h(t)\,dt\right)^{-2}\sigma_1(\mathcal{G}(h)).
\]
On the other hand, choosing $w=w_\sigma$ to be the eigenfunction of $\sigma_1(\mathcal{G}(h))$, we get 
\[
\mu_1(h)\le\left(\int_0^1 h(t)\,dt\right)^{-2}\sigma_1(\mathcal{G}(h)),
\]
which gives \eqref{eq: neumtosigma}.

Let $k\in\prof$ and let $K$ be the integral function defined in \eqref{eq: K}. If we evaluate the integral on the right-hand side by means of the change of variables $t=K(s)$, we finally get
\[
\int_0^1 h(t)\,dt=\left(\int_0^1 \frac{1}{\sqrt{k(t)}}\,dt\right)^{-1},
\]
which gives \eqref{eq: sigmatoneum}.
\end{proof}
The following punctual estimate will be crucial.
\begin{prop}
    \label{prop: g<2h}
    Let $h\in\prof$. Then 
    \[
        \left(\int_0^1 h(t)\,dt\right)^{-1}\mathcal{G}(h)(x)\le 2h(x).
    \]
\end{prop}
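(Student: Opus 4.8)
The plan is to reduce the pointwise estimate to a statement about $h$ composed with its own normalized primitive, and then to close it with two elementary chord estimates coming from concavity. Writing $\alpha=\int_0^1 h(t)\,dt$ and performing the substitution $u=H^{-1}(x)$, so that $x=H(u)=\tfrac1\alpha\int_0^u h$, the claimed inequality $\alpha^{-1}\mathcal{G}(h)(x)\le 2h(x)$ becomes, after recalling $\mathcal{G}(h)(x)=h^2(H^{-1}(x))$,
\begin{equation*}
h(u)^2\le 2\alpha\, h\!\left(H(u)\right),\qquad u\in[0,1].
\end{equation*}
This is trivial when $h(u)=0$, so I would assume $h(u)>0$ and set $z=H(u)\in[0,1]$, reducing everything to $h(z)\ge h(u)^2/(2\alpha)$.

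First I would record the two lower bounds on the primitive that follow from writing $h$ above its chords. Since $h$ is concave and nonnegative, integrating the chords of $h$ on $[0,u]$ and on $[u,1]$ gives $\int_0^u h\ge \tfrac{u}{2}h(u)$ and $\int_u^1 h\ge \tfrac{1-u}{2}h(u)$ (using $h(0),h(1)\ge 0$); dividing by $\alpha$ these read
\begin{equation*}
z\ge \frac{u\,h(u)}{2\alpha},\qquad 1-z\ge \frac{(1-u)\,h(u)}{2\alpha}.
\end{equation*}

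Next I would lower bound $h(z)$ itself by concavity, organizing the argument around the position of $z$ relative to $u$. If $z\le u$, the chord of $h$ on $[0,u]$ evaluated at $z$ gives $h(z)\ge \tfrac{z}{u}h(u)$, and combining with the first bound above yields $h(z)\ge \tfrac{h(u)^2}{2\alpha}$. If instead $z\ge u$, the chord on $[u,1]$ gives $h(z)\ge \tfrac{1-z}{1-u}h(u)$, and the second bound gives again $h(z)\ge \tfrac{h(u)^2}{2\alpha}$. In both cases one obtains $2\alpha\,h(H(u))\ge h(u)^2$, which is exactly the reduced inequality; undoing the substitution proves the proposition. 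The endpoints $u\in\{0,1\}$ force $z=u$ and are covered by the same estimates, with the degenerate chord ratios equal to $1$.

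The one genuine subtlety — and the reason a single chord estimate does not suffice — is that $h$ need not be monotone, so $z=H(u)$ may lie on either side of $u$; the argument is therefore built on the case split $z\le u$ versus $z\ge u$, each case closed by the matching one-sided chord bound. I expect this to be the only point requiring care, since everything else is a change of variables together with the two integrated chord inequalities. As a sanity check, both $T_0$ and $T_1$ saturate the bound pointwise, which also suggests that tracking equality forces $h$ to be linear on each side and to vanish at the relevant endpoint, i.e. to be a triangle.
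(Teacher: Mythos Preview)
Your argument is correct. After the substitution $u=H^{-1}(x)$ the inequality becomes $h(u)^2\le 2\alpha\,h(H(u))$, and your two ingredients --- the integrated chord bounds $\alpha z\ge \tfrac{u}{2}h(u)$, $\alpha(1-z)\ge \tfrac{1-u}{2}h(u)$ and the pointwise chord bounds $h(z)\ge \tfrac{z}{u}h(u)$ (for $z\le u$), $h(z)\ge \tfrac{1-z}{1-u}h(u)$ (for $z\ge u$) --- combine cleanly in each case to yield $h(z)\ge h(u)^2/(2\alpha)$. The endpoints $u\in\{0,1\}$ are indeed absorbed by the second case (resp.\ the first), since there $z=u$ and the non-degenerate ratio equals $1$.

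This is a genuinely different route from the paper's. The paper normalizes to $\alpha=1$, locates the unique point $\bar x$ where $H(x)-x$ changes sign, and compares derivatives: from $g'(x)=2h'(H^{-1}(x))$ and the monotonicity of $h'$ it gets $g'\le 2h'$ on $[0,\bar x]$ and $g'\ge 2h'$ on $[\bar x,1]$, then integrates from the two endpoints using $g(0)=h(0)^2\le 2h(0)$ and $g(1)=h(1)^2\le 2h(1)$ (which hold since $\|h\|_\infty\le 2$). Your proof avoids differentiation altogether and is more self-contained: only two applications of the secant inequality are needed, and the case split $z\lessgtr u$ replaces the global analysis of the sign of $H(x)-x$. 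The paper's approach, in return, isolates the structural object $\bar x$ that is reused in the characterization of equality (\autoref{g=2h}); your closing remark that equality forces linearity with vanishing endpoint on the relevant side recovers the same information by tracking equality through the chord inequalities.
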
  
\begin{proof}
    Up to rescaling $h$, we can assume without loss of generality that $h\in\prof_1$. Notice, in addition, that if $h\equiv 1$, then the proof is trivial. Therefore, let $h\in \prof_1$ and $h\not\equiv 1$, and define 
    \[
    H(x)=\int_0^x h(t)\, dt.
    \]
    We claim that there exists a unique $\bar{x}\in[0,1]$ such that
    \begin{equation}
    \label{eq: Hbarx}
    \begin{aligned}
    H(x)\le x \qquad \qquad& x\in [0,\bar{x}], \\
    H(x)\ge x \qquad \qquad& x\in [\bar{x},1].
    \end{aligned}
    \end{equation}
    Indeed, if we denote by
    \[
    f(x)=H(x)-x,
    \]
    then, by the concavity of $h$ and the integral constraint, we have that the equation $h=1$ admits at most two solutions ($h$ cannot be equal to 1 in an entire interval, otherwise the concavity would give $\norma{h}_1<1$). Therefore, we have that there exist two points $x_1\in[0,1)$, and $x_2\in (0,1]$ such that
     \begin{align*}
    f'(x)< 0 \qquad \qquad &x\in [0,x_1)\cup(x_2,1], \\
    f'(x)>0  \qquad \qquad &x\in (x_1,x_2).
    \end{align*}
    Finally, noticing that $f(0)=f(1)=0$, then we have that there exists a unique zero $\bar{x}$ of $f$ in the interval $[x_1,x_2]$, thus the claim is proved. In particular, we have that
    \begin{align*}
    H^{-1}(x)\ge x \qquad \qquad& x\in [0,\bar{x}], \\
    H^{-1}(x)\le x \qquad \qquad& x\in [\bar{x},1],
    \end{align*}
    and $h(\bar{x})>1$.
    
    \medskip 
    These estimates allow us to compare the derivatives of $\mathcal{G}(h)$ and $h$. Denoting by $g(x)=\mathcal{G}(h)(x)$, we have that
    \begin{align}
    \label{eq: stimag'1}
        g'(x)=2h'(H^{-1}(x))\le 2h'(x) \qquad \qquad& x\in [0,\bar{x}], \\
    \label{eq: stimag'2}
        g'(x)=2h'(H^{-1}(x))\ge 2h'(x)  \qquad \qquad&x\in [\bar{x},1].
    \end{align}
    We recall that, as in \eqref{eq: h<2}, the concavity of $h$ ensures that
    \begin{equation*}
        {\norma{h}_\infty}\le 2.
    \end{equation*}
    Therefore, we get
    \[
    g(0)=h^2(0)\le 2 h(0),
    \]
    and by \eqref{eq: stimag'1},
    \[
        g(x)\le 2h(x) \qquad \qquad x\in[0,\bar{x}].
    \]
    Analogously,
    \[
        g(1)=h^2(1)\le 2h(1),
    \]
    and, by \eqref{eq: stimag'2},
    \[
        g(x)\le 2h(x) \qquad \qquad x\in[\bar{x},1].
    \]
\end{proof}

\begin{prop}\label{g=2h}
Let $h\in\prof_1$. 
\[\mathcal{G}(h)=2 h\]
if and only if $h=T_{x_0}$ for some $x_0\in[0,1]$.    
\end{prop}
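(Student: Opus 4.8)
The plan is to prove both implications by exploiting the pointwise derivative comparisons already established in the proof of \autoref{prop: g<2h}. Recall that, writing $g=\mathcal{G}(h)$ and $H(x)=\int_0^x h$, we have $g'(x)=2h'(H^{-1}(x))$ a.e., and that the sign of $H^{-1}(x)-x$ is governed by the unique crossing point $\bar{x}$ where $h(\bar{x})>1$. The equality $g=2h$ must therefore be read as an equality between two concave functions, and the key observation is that equality of the functions forces equality of their derivatives almost everywhere, i.e. $h'(H^{-1}(x))=h'(x)$ for a.e.\ $x$.

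For the easy direction, suppose $h=T_{x_0}$. Here $h'$ is piecewise constant, taking the value $2/x_0$ on $(0,x_0)$ and $-2/(1-x_0)$ on $(x_0,1)$. A direct computation of $H$ and $H^{-1}$ on each of the two affine pieces shows that $H^{-1}$ maps each piece into itself (the maximum point $x_0$ is fixed), so $h'(H^{-1}(x))=h'(x)$ for a.e.\ $x$; combined with the boundary value $g(0)=h^2(0)=2h(0)$ when $x_0>0$ (and the symmetric endpoint when $x_0=0$), integrating gives $g=2h$. This is essentially a substitution and I would keep it brief.

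For the converse, assume $g=2h$ with $h\in\prof_1$ and $h\not\equiv 1$ (the case $h\equiv 1$ is impossible since then $g=h^2=1\neq 2h$, so no equality holds and there is nothing to prove). The plan is to revisit the inequalities \eqref{eq: stimag'1} and \eqref{eq: stimag'2}: equality $g'=2h'$ a.e.\ forces $h'(H^{-1}(x))=h'(x)$ for a.e.\ $x$. Since $h'$ is monotone decreasing (by concavity) and $H^{-1}(x)\ge x$ on $[0,\bar{x}]$ while $H^{-1}(x)\le x$ on $[\bar{x},1]$, the identity $h'(H^{-1}(x))=h'(x)$ can hold only where $h'$ is locally constant across the interval $[\min\{x,H^{-1}(x)\},\max\{x,H^{-1}(x)\}]$. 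I would argue that wherever $H^{-1}(x)\ne x$ strictly, this forces $h'$ to be constant on a genuine interval, and patching these constancy regions together—using the strict monotonicity of $H^{-1}-\mathrm{id}$ away from $\bar{x}$ established via the points $x_1,x_2$ in the proof of \autoref{prop: g<2h}—shows that $h'$ takes at most one value on $(0,\bar{x})$ and at most one value on $(\bar{x},1)$, i.e.\ $h$ is affine on each side of $\bar{x}$. A non-negative, concave, piecewise-affine function with a single interior kink and $h(\bar x)>1$ that integrates to $1$ must be a triangle $T_{x_0}$; the boundary behaviour then pins down $x_0$.

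The main obstacle I expect is the converse's rigidity step: translating the almost-everywhere identity $h'(H^{-1}(x))=h'(x)$ into the conclusion that $h$ is affine on each side of $\bar{x}$. The delicate point is that $H^{-1}(x)=x$ precisely at $\bar{x}$ (and possibly on a degenerate set), so the constancy of $h'$ must be propagated from the strict-inequality regions; one has to rule out the possibility that $h'$ varies while $H^{-1}\circ h'$ happens to match it through cancellation. I would handle this by noting that $h'$ decreasing together with $h'(H^{-1}(x))=h'(x)$ means $h'$ is constant on every maximal interval where $H^{-1}(x)\neq x$, and that by the structure of $\bar x$ these intervals exhaust $(0,\bar x)$ and $(\bar x,1)$ up to the single crossing point, leaving exactly the two affine pieces that characterize a triangle.
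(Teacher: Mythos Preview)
Your converse argument has a genuine gap at the final step. The claim ``a non-negative, concave, piecewise-affine function with a single interior kink at $\bar{x}$, with $h(\bar{x})>1$ and $\int_0^1 h=1$, must be a triangle $T_{x_0}$'' is false: take $h(0)=h(1)=\tfrac12$, $h(\tfrac12)=\tfrac32$, piecewise affine. This $h$ lies in $\prof_1$, is concave, has a single kink at $\bar{x}=\tfrac12$ with $h(\bar{x})>1$, and yet is not a triangle. What you are missing is the endpoint information: evaluating $\mathcal{G}(h)=2h$ at $x=0$ and $x=1$ (which are always fixed points of $H$) gives $h(0)^2=2h(0)$ and $h(1)^2=2h(1)$, forcing $h(0),h(1)\in\{0,2\}$. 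Only after inserting this do the two affine pieces become a genuine triangle. Your phrase ``the boundary behaviour then pins down $x_0$'' suggests you think the triangle shape is already secured before looking at the boundary, which is the wrong order.

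Once you notice that endpoint observation, the entire derivative analysis becomes unnecessary, and this is exactly the paper's route. The paper never differentiates: it simply observes that at \emph{any} fixed point $y$ of $H$ the equation $\mathcal{G}(h)(y)=2h(y)$ reads $h(y)^2=2h(y)$, so $h(y)\in\{0,2\}$; and whenever $h(y)=2$ one has $h=T_y$ by the extremality bound \eqref{eq: h<2}. Since $0$ and $1$ are always fixed points, either $h(0)=2$ or $h(1)=2$ finishes immediately; otherwise $h(0)=h(1)=0$, the interior fixed point $\bar{x}$ from \autoref{prop: g<2h} satisfies $h(\bar{x})>1$, hence $h(\bar{x})=2$, hence $h=T_{\bar{x}}$. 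Your iteration scheme showing $h'$ constant on each side of $\bar{x}$ can be made rigorous (what you actually need is $H^{-1}(x)\ne x$ on $(0,1)\setminus\{\bar{x}\}$, not monotonicity of $H^{-1}-\mathrm{id}$, and then iterate $H^{-1}$ towards $\bar{x}$), but it is extra work that buys nothing beyond what the fixed-point evaluation already gives.
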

\begin{proof}
By direct computation, one can prove that if $h=T_{x_0}$ for some $x_0\in[0,1]$, then 
\[h^2(x)=2h(H(x)).\]
Let us now assume that $\mathcal{G}(h)=2h$. Notice that, if $y\in[0,1]$ is a fixed point of the integral function $H$, then
\begin{equation}
\label{eq: h^2}
   h^2(y)=h^2(H^{-1}(y))=\mathcal{G}(h)(y)=2h(y),
\end{equation}
so that either $h(y)=0$ or $h(y)=2$. In particular, if $h(y)=2$, then by the concavity of $h$, we have that 
\[
h(x)=T_y(x) \qquad \forall \, x\in[0,1].
\]
Since $0$ and $1$ are always fixed points of $H$, if either $h(0)=2$ or $h(1)=2$ the assertion is proved. Therefore, let us assume that
\[
h(0)=0=h(1),
\]
then the equation $h=1$ admits at least two distinct solutions $0<x_1<x_2<1$ and, arguing as in the proof of \autoref{prop: g<2h}, we have that there exists  a fixed point $\bar{x}\in[x_1,x_2]$ for the function $H$ and, by \eqref{eq: h^2}, necessarily  $h(\bar{x})=2$ and $h=T_{\bar{x}}$.
\end{proof}

\begin{prop}
\label{prop: mu/sigma}
Let $h\in \prof_1$, then 
\begin{equation}\label{mu/sigma<2}
\frac{\mu_1(h)}{\sigma_1(h)}\le 2
\end{equation}
and the equality holds if and only if $h = T_{x_0}$ for some $ x_0\in [0, 1]$.
\end{prop}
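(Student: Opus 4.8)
The plan is to read off the bound directly from the correspondence between $\mu_1$ and $\sigma_1$ furnished by \autoref{prop: mu=sigma}, combined with the pointwise comparison of \autoref{prop: g<2h}. Since $h\in\prof_1$ satisfies $\int_0^1 h\,dt = 1$, the identity \eqref{eq: neumtosigma} collapses to $\mu_1(h)=\sigma_1(\mathcal{G}(h))$, and \autoref{prop: g<2h} gives the pointwise inequality $\mathcal{G}(h)\le 2h$. Invoking first the monotonicity and then the homogeneity of $\sigma_1$, I obtain
\[
\mu_1(h)=\sigma_1(\mathcal{G}(h))\le\sigma_1(2h)=2\,\sigma_1(h),
\]
which is exactly \eqref{mu/sigma<2}.

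For the characterization of equality, the sufficiency is immediate: if $h=T_{x_0}$, then \autoref{g=2h} yields $\mathcal{G}(h)=2h$, so each inequality above becomes an equality and $\mu_1(h)/\sigma_1(h)=2$. The necessity is where the actual content lies. Assuming $\mu_1(h)=2\sigma_1(h)$, the displayed chain forces $\sigma_1(\mathcal{G}(h))=\sigma_1(2h)$ while still $\mathcal{G}(h)\le 2h$. To conclude that $\mathcal{G}(h)=2h$ --- after which \autoref{g=2h} immediately gives $h=T_{x_0}$ --- I need a \emph{strict} version of the monotonicity of $\sigma_1$, namely that $h_0\le h_1$ together with $\sigma_1(h_0)=\sigma_1(h_1)$ implies $h_0=h_1$.

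The main obstacle is precisely this strict monotonicity, which I would establish variationally. Let $v_1$ be a first eigenfunction for $\sigma_1(h_1)$, normalized by $\int_0^1 v_1^2\,dx=1$. Testing the characterization \eqref{eigensigma} of $\sigma_1(h_0)$ against $v_1$ and using $h_0\le h_1$ gives
\[
\sigma_1(h_0)\le\int_0^1 (v_1')^2 h_0\,dx\le\int_0^1 (v_1')^2 h_1\,dx=\sigma_1(h_1)=\sigma_1(h_0),
\]
so that $\int_0^1 (v_1')^2\,(h_1-h_0)\,dx=0$, forcing $(v_1')^2(h_1-h_0)=0$ almost everywhere. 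The decisive point is that $v_1'\neq 0$ a.e.\ in $(0,1)$: as recalled in the proof of \autoref{prop: sigma=hardy}, the eigenfunction vanishes at exactly one interior point and $h_1 v_1'$ vanishes only there, while every element of $\prof$ is strictly positive on $(0,1)$ (a nonnegative concave function that vanished at an interior point would, by concavity, vanish on a whole side, contradicting that it is nonzero with fixed integral), so $v_1'$ can vanish only at that single point. Hence $h_1-h_0=0$ a.e., i.e.\ $h_0=h_1$. Applying this with $h_0=\mathcal{G}(h)$ and $h_1=2h$ completes the argument.
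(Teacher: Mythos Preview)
Your proof is correct and is essentially the paper's own argument: the paper tests $\sigma_1(\mathcal G(h))$ with an eigenfunction $w$ of $\sigma_1(h)$ to get
\[
\mu_1(h)=\sigma_1(\mathcal G(h))\le\frac{\int_0^1 (w')^2\,\mathcal G(h)\,dx}{\int_0^1 w^2\,dx}\le\frac{2\int_0^1 (w')^2\,h\,dx}{\int_0^1 w^2\,dx}=2\,\sigma_1(h),
\]
and then, under equality, reads off $\int_0^1 (w')^2(\mathcal G(h)-2h)\,dx=0$ and invokes \autoref{g=2h}; this is exactly your ``strict monotonicity'' step written concretely.

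One small inaccuracy in your recall of \autoref{prop: sigma=hardy}: it is $(h v_1')'=-\sigma_1 v_1$ that vanishes at the unique zero $x_0$ of $v_1$, not $h v_1'$ itself. In fact $w:=h v_1'$ satisfies $w(0)=w(1)=0$ with $w'$ of fixed sign on each side of $x_0$, so $w$ is strictly of one sign on $(0,1)$ and hence $v_1'$ vanishes \emph{nowhere} in $(0,1)$ --- which is stronger than you need and makes the conclusion $\mathcal G(h)=2h$ immediate.
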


\begin{proof}
 Let $w$ be an eigenfunction for $\sigma_1(h)$. Using Theorem \autoref{prop: mu=sigma},  \autoref{prop: g<2h},  and the variational characterization of $\sigma_1(\mathcal{G}(h))$, we obtain  
\begin{equation}
    \label{mule2sigma}
    \mu_1(h)=\sigma_1(\mathcal{G}(h))\leq\ddfrac{\int_0^1 (w')^2 \mathcal{G}(h)\,dx}{\int_0^1 w^2\,dx}\le\ddfrac{2\int_0^1 (w')^2 h\,dx}{\int_0^1 w^2\,dx}=2\sigma_1(h),
\end{equation}
thus proving \eqref{mu/sigma<2}.
Assume now that for some $h\in\prof_1$ equality holds, then by \eqref{mule2sigma} we have 
\begin{equation}\label{eq =}
\int_0^1 (w')^2 (\mathcal{G}(h)-2h)\,dx=0.
\end{equation}
Since $\mathcal{G}(h)\le2h$, then \eqref{eq =} yields $\mathcal{G}(h)=2h$, and \autoref{g=2h} ensures that $h=T_{x_0}$ for some $x_0\in[0,1]$.
\end{proof}
\begin{oss}
    Since it is not possible in general to have that $\mathcal{G}(h)\ge h$, then the same argument cannot be used for the lower bound
    \[
        \frac{\mu_1(h)}{\sigma_1(h)}\ge 1.
    \]
    For instance, let
    \[
        h(x)=\frac{1}{2}+x,
    \]
    then $\mathcal{G}(h)(0)=h^2(0)<h(0)$, while $\mathcal{G}(h)(1)=h^2(1)>h(1)$.
\end{oss}
Here we prove the lower bound in \autoref{teor: mu/sigma} in the symmetric case.
\begin{prop}
\label{prop: lower}
 Let $h \in \prof_1$ such that $h(1-x)=h(x)$ for all $x \in [0,1]$. Then
    \begin{equation}
    \label{eq: mu/sigma>1}
    \frac{\mu_1(h)}{\sigma_1(h)} \geq 1.  
    \end{equation}
\end{prop}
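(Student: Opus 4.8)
The plan is to avoid the operator $\mathcal{G}$---which, as the preceding remark shows, cannot force the lower bound through a pointwise comparison---and instead to exploit the symmetry of $h$ at the level of the eigenfunctions. Write $\mu_1=\mu_1(h)$, $\sigma_1=\sigma_1(h)$, and let $u$ be an eigenfunction associated with $\mu_1$, i.e. a minimizer in \eqref{eigenmu} solving \eqref{eq:Neumann}. Since $h$ is symmetric, $x\mapsto -u(1-x)$ is again an eigenfunction for $\mu_1$ with the same Rayleigh quotient; invoking the simplicity of $\mu_1$ and the fact that its eigenfunction changes sign exactly once in $(0,1)$ (or, equivalently, splitting a minimizer into its symmetric and antisymmetric parts and comparing Rayleigh quotients), one may take $u$ to be antisymmetric, $u(1-x)=-u(x)$. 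Then $u(1/2)=0$, the function $u^2$ is symmetric about $1/2$, and---this is the decisive gain--- $\int_0^1 u\,dx=0$. Taking $u>0$ on $[0,1/2)$ and integrating $-(hu')'=\mu_1 h u$ from $0$ to $x$, the boundary condition $h(0)u'(0)=0$ yields $-h(x)u'(x)=\mu_1\int_0^x h u\,dt\ge 0$, so $u$ is non-increasing on $[0,1/2]$, and hence so is $u^2$.

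Next I would feed $u$ into the variational characterization \eqref{eigensigma} of $\sigma_1$. Admissibility there is exactly the constraint $\int_0^1 u\,dx=0$ secured above, so
\[
\sigma_1\le \ddfrac{\int_0^1(u')^2h\,dx}{\int_0^1 u^2\,dx},\qquad\qquad \mu_1=\ddfrac{\int_0^1(u')^2h\,dx}{\int_0^1 u^2h\,dx}.
\]
Dividing the two relations, the common numerator cancels and I am left with
\[
\frac{\mu_1}{\sigma_1}\ge \frac{\int_0^1 u^2\,dx}{\int_0^1 u^2h\,dx},
\]
so that \eqref{eq: mu/sigma>1} is reduced to the single scalar inequality $\int_0^1 u^2(1-h)\,dx\ge 0$.

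Finally I would establish this by a correlation argument. By the symmetry of $u^2$ and of $h$ one has $\int_0^1 u^2(1-h)\,dx=2\int_0^{1/2}u^2(1-h)\,dx$. On $[0,1/2]$ the concave symmetric weight $h$ is non-decreasing (its maximum sits at $1/2$), so $1-h$ is non-increasing, as is $u^2$; moreover $\int_0^{1/2}h\,dx=\tfrac12$ by symmetry, whence $\int_0^{1/2}(1-h)\,dx=0$. Chebyshev's integral inequality for the two comonotone functions $u^2$ and $1-h$ then gives
\[
\int_0^{1/2}u^2(1-h)\,dx\ge 2\left(\int_0^{1/2}u^2\,dx\right)\left(\int_0^{1/2}(1-h)\,dx\right)=0,
\]
which is precisely what was needed to conclude.

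The main obstacle is the clean justification of the qualitative properties of $u$---its antisymmetry and the monotonicity of $u^2$ on the half-interval---which rest on the simplicity of $\mu_1$ and the nodal structure of Sturm--Liouville eigenfunctions; once these are granted, the remaining steps are mechanical. The conceptual heart is the observation that $u^2$ and $1-h$ are comonotone about the center $x=1/2$ (both largest at the endpoints and smallest at the center), so the mean-zero factor $1-h$ correlates nonnegatively against $u^2$. It is this comonotonicity, rather than any pointwise domination of $h$ by $\mathcal{G}(h)$, that produces the lower bound, and it is exactly the ingredient that fails in the general, non-symmetric, case.
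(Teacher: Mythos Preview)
Your argument is correct and takes a genuinely different route from the paper's. The paper stays with the machinery of $\mathcal{G}$: it uses the Hardy-type characterization of \autoref{prop: sigma=hardy} together with \autoref{prop: mu=sigma} to write $\mu_1(h)=\sigma_1(\mathcal{G}(h))=\int_0^1(w')^2/\int_0^1 w^2/\mathcal{G}(h)$ for a positive, symmetric, concave $w$, inserts the same $w$ as a test function for $\sigma_1(h)$ in that Hardy form, and then reduces everything to the comparison $\int_0^1 w^2/\mathcal{G}(h)\le\int_0^1 w^2/h$, which is obtained by the change of variables $x=H(y)$ and the observation that, by symmetry, $H(y)\le y$ on $[0,1/2]$ while $w$ is increasing there. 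You instead bypass $\mathcal{G}$ and the Hardy characterization entirely: you work with the $\mu_1$-eigenfunction $u$, exploit symmetry to make it antisymmetric (hence admissible for the $\sigma_1$ quotient \eqref{eigensigma}), and reduce to $\int_0^1 u^2(1-h)\ge0$, which you settle by Chebyshev's integral inequality using the comonotonicity of $u^2$ and $1-h$ on $[0,1/2]$ and the zero mean of $1-h$ there. Both proofs ultimately hinge on the same monotonicity picture on the half-interval, but the paper's version leverages the $\mu_1\leftrightarrow\sigma_1$ correspondence already built for the upper bound, while yours is more self-contained and elementary. The only soft spots you flag yourself---simplicity of $\mu_1$ and the single sign change of its eigenfunction---are standard Sturm--Liouville facts; your parenthetical alternative (decompose a minimizer into symmetric and antisymmetric parts and note that a nonzero symmetric first eigenfunction would be forced to have $u(1/2)=u'(1/2)=0$, hence vanish identically) is a clean way to avoid citing them.
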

\begin{proof}

    Let $g=\mathcal{G}(h)$. By the variational characterization \eqref{eq:varcar2} of $\sigma_1$, and \autoref{prop: mu=sigma}, we can find a function $w\in H^2(0,1)$, symmetric with respect to $x=1/2$, such that
    \begin{equation}
        \label{eq: mu=sigma}
        \mu_1(h)=\sigma_1(g)=\ddfrac{\int_0^1 (w')^2(x)\,dx}{\int_0^1 \frac{w^2(x)}{g(x)}\,dx},
    \end{equation}
    and $w$ solves the problem
    \begin{equation*}
        \begin{cases}
            \displaystyle{-w''(x)= \frac{\sigma_1(h)}{h(x)}w(x) }\qquad\qquad x \in (0,1),\\[7 pt]
            w(0)=w(1)=0.
        \end{cases}
        \end{equation*}
    We can choose $w$ to be positive and concave, so that
    \begin{equation}
    \label{eq: wincreasing}
    \begin{aligned}
        w'(x)\ge 0 \qquad \qquad &\text{in }\left(0,\frac{1}{2}\right),\\
        w'(x)\le 0 \qquad \qquad &\text{in }\left(\frac{1}{2},1\right).
    \end{aligned}
    \end{equation}
    Moreover, by the variational characterization \eqref{eq:varcar2}, we get
    \begin{equation}
        \label{eq: sigma<mu}
        \sigma_1(h)\le \ddfrac{\int_0^1 (w')^2(x)\,dx}{\int_0^1 \frac{w^2(x)}{h(x)}\,dx},
    \end{equation}
    and then, joining \eqref{eq: mu=sigma} and \eqref{eq: sigma<mu}, we get
    \[
        \frac{\mu_1(h)}{\sigma_1(h)}\ge \ddfrac{\int_0^1 \frac{w^2(x)}{h(x)}\,dx}{\int_0^1 \frac{w^2(x)}{g(x)}\,dx}.
    \]
    To prove \eqref{eq: mu/sigma>1} it is sufficient to prove that
    \[
        \int_0^1 \frac{w^2(x)}{g(x)}\,dx\le \int_0^1 \frac{w^2(x)}{h(x)}\,dx.
    \]
    We now compute the left-hand side by means of the change of variables $x=H(y)$, where 
    \[
    H(y)=\int_0^y h(t)\,dt,
    \]
    so that
    \[
        \int_0^1 \frac{w^2(x)}{g(x)}\,dx=\int_0^1 \frac{w^2(H(y))}{h(y)}\,dy.
    \]
    We now notice that the symmetry of $h$ gives \eqref{eq: Hbarx} with $\bar{x}=1/2$, namely
    \begin{equation}
    \label{eq: H1/2}
    \begin{aligned}
    H(y)\le y \qquad \qquad& x\in \left[0,\frac{1}{2}\right], \\
    H(y)\ge y \qquad \qquad& x\in \left[\frac{1}{2},1\right].
    \end{aligned}
    \end{equation}
    Finally, joining \eqref{eq: H1/2}, and \eqref{eq: wincreasing}, we have
    \[
       \int_0^1 \frac{w^2(x)}{g(x)}\,dx=\int_0^1 \frac{w^2(H(y))}{h(y)}\,dy\le\int_0^1 \frac{w^2(y)}{h(y)}\,dy,
    \]
    which concludes the proof.
\end{proof}
\begin{proof}[Proof of \autoref{teor: mu/sigma}]
The result follows from \autoref{prop: mu/sigma} and \autoref{prop: lower}
\end{proof}
\subsection*{Acknowledgements}
We would like to thank Carlo Nitsch and Cristina Trombetti for the valuable advice that helped us to achieve these results.

The three authors were partially supported by Gruppo Nazionale per l’Analisi Matematica, la Probabilità e le loro Applicazioni
(GNAMPA) of Istituto Nazionale di Alta Matematica (INdAM).

\vspace{5mm}

\subsection*{Competing Interests}
The authors report there are no competing interests to declare. 

\vspace{1cm}

\addcontentsline{toc}{chapter}{Bibliografia}

\printbibliography[heading=bibintoc, title={References}]

\Addresses

\end{document}